\newcommand{\eps}{\varepsilon}
\def\R{{\mathbb R}}
\def\t{{\mathbb T}}
\def\d{{\,\rm d}}
\def\w{{\rm w}}
\def\[{\left\lfloor}
\def\]{\right\rceil}
\def\S{{\mathcal S}}
\def\:{\colon}
\def\be#1{\begin{equation}\label{#1}}
\def\ee{\end{equation}}
\newtheorem{theorem}{Theorem}
\newtheorem{lemma}[theorem]{Lemma}
\newtheorem{corollary}[theorem]{Corollary}
\newtheorem{definition}[theorem]{Definition}
\begin{document}

\title{Energy conservation for the Euler equations on $\t^2\times \R_+$ for weak solutions defined without reference to the pressure}
\author{James C.\ Robinson}
\affil{Mathematics Institute, University of Warwick, Coventry, CV4 7AL. UK.}
\author{Jos\'e L.\ Rodrigo}
\affil{Mathematics Institute, University of Warwick, Coventry, CV4 7AL. UK.}
\author{Jack W.D.\ Skipper}
\affil{Institute of Applied Mathematics, Leibniz University Hannover,
 Welfengarten~1, 30167 Hannover. Germany.}
\maketitle
\begin{abstract}
 We study weak solutions of the incompressible Euler equations on $\t^2\times \R_+$; we use test functions that are divergence free and have zero normal component, thereby obtaining a definition that does not involve the pressure. We prove energy conservation under the
assumptions that $u\in L^3(0,T;L^3(\t^2\times \R_+))$,
$$
 \lim_{|y|\to 0}\frac{1}{|y|}\int^T_0\int_{\t^2}\int^\infty_{x_3>|y|} |u(x+y)-u(x)|^3\d x\d t=0,
$$
and an additional continuity condition near the boundary: for some $\delta>0$ we require $u\in L^3(0,T;C^0(\t^2\times [0,\delta])))$. We note that all our conditions are satisfied whenever $u(x,t)\in C^\alpha$, for some $\alpha>1/3$, with H\"older constant $C(x,t)\in L^3(\t^2\times\R^+\times(0,T))$.
\end{abstract}

\section{Introduction}

Energy conservation for solutions of the incompressible Euler equations
$$
 \partial_t u+ (u\cdot\nabla)u +\nabla p=0 \quad \nabla \cdot u=0
$$
on domains without a boundary ($\R^d$ or $\t^d$ with $d\ge 2$) is now well understood. This problem has been studied extensively by \cite{constantin1994onsager}, \cite{duchon2000inertial},  \cite{cheskidov2008energy}, \cite{shvydkoy2010lectures} (see also \citealp{RRS1}) who have all proved energy conservation with varying  conditions on the solution. These conditions are all weaker than $u\in C^{1/3+\eps}$ for some $\eps>0$ and thus any solution satisfying $u \in C^{1/3+\eps}$ will conserve energy, that is, $\|u(t)\|_{L^2}=\|u(0)\|_{L^2}$ for every $t\ge0$.

These results prove the `positive' part of the `Onsager Conjecture' \citep{onsager1949statistical}: 
solutions with spatial regularity $C^{1/3+\eps}$ will conserve energy.  Recently \cite{isett2016} and \cite{delellis2017} have constructed solutions with regularity $C^{1/3-\eps}$ that do not conserve energy (in fact they show the existence of solutions that satisfy any prescribed energy profile).

In the case with boundary, it is easy to see, using standard integration-by parts techniques, that energy is conserved for a $C^1$ solution on a Lipschitz domain $\Omega$ with the solution $u$ satisfying $u\cdot n=0$ on $\partial\Omega$. In \cite{RRS1} we obtained sufficient conditions, similar to those presented here, for energy conservation in $\t^2 \times \R_+$, using a weak formulation that required a pressure term on the boundary. However, in our subsequent analysis the pressure played a very minimal role.

\cite{bardos2017} have shown energy conservation for $C^2$ bounded domains under the assumption $u \in L^3((0,T);C^{0,\alpha}(\bar{\Omega}))$ for $\alpha>1/3$; their definition of a weak solution requires a pressure function defined throughout the domain, and their result requires a careful analysis of this pressure.

In this paper we consider a solution $u$ on the spatial domain $\t^{2} \times \R_+$ and present an approach that  completely avoids the use of the pressure. It also involves conditions that are less restrictive that the $C^{1/3+\varepsilon}$ result of Bardos and Titi.
More precisely, we will show that for a solution $u$ to conserve energy it suffices that
$u\in L^3(0,T;L^3(\t^2\times \R_+))$ and
\begin{equation}\label{bulkintro}
 \lim_{|y|\to 0}\frac{1}{|y|}\int^T_0\int_{\t^2}\int^\infty_{x>|y|} |u(x+y)-u(x)|^3\d x\d t=0,
\end{equation}
along with a continuity condition near the boundary: $u\in L^3(0,T;C^0(\t^2\times [0,\delta]))$ for some $\delta>0$.
The bulk condition in \eqref{bulkintro} is very similar to the best known condition for the spatial domains $\R^d$ or $\t^d$, the only difference being that the domain of integration restricts to the interior of the domain.


\medskip

The plan of the paper is as follows. Section 2 contains some preliminary material and our definition of a `weak solution' of the Euler equations. In Section 3 we introduce a reflection and extension map to the full domain. In Section 4 we show that it is possible to test the weak formulation of the Euler equation with a mollification of the extended solution constructed in the previous section. Section 5 contains the main statement and its proof.

\section{Weak solutions of the Euler equations on $\t^2\times \R_+$}

In this section we introduce some basic notation and make precise the notion of weak solution of the Euler equation that we will be using.

For vector-valued functions $f,g$ and matrix-valued functions $F,G$ we use the notation
\be{ipd}
 \langle f,g \rangle_{\Omega}=\int_{\Omega}f_i(x)g_i(x) \d x \quad \mathrm{and}  \quad \langle F:G \rangle_{\Omega}=\int_{\Omega}F_{ij}(x)G_{ij}(x) \d x
\ee
using Einstein's summation convention (sum over repeated indices).

We let $\t^2$ denote the two-torus, write $\R_+$ for $[0,\infty)$, and define
$D_+:=\t^2\times \R_+$. We use the notation $\mathcal{S}(D_+\times [0,T])$ to denote functions in $C^\infty(D_+\times [0,T])$ that have Schwartz-like decay in the unbounded spatial direction, i.e.
\begin{equation}\label{Schwartzx3def}
 \sup_{(x,t)\in D_+\times [0,T]}|\partial^\alpha \phi||x_3|^\beta < \infty,
\end{equation}
for all integers $\beta\ge 0$ and all nonnegative multi-indices $\alpha$  over the variables $(x_1,x_2,x_3,t)$. Similarly, when there is no time component, the notation $\mathcal{S}(D_+)$ denotes functions in $C^\infty(D_+)$ that have Schwartz-like decay in the unbounded spatial direction as in \eqref{Schwartzx3def}.



We set
$$
\mathcal{S}_{n,\sigma}(D_+):= \{\phi\in \mathcal{S}(D_+):\text{div } \phi =0 \text{ and } \phi\cdot n = 0 \text{ on } \partial D_+\}
$$
and define the space $H_\sigma(D_+)$ as
$$
 H_\sigma(D_+):= \text{the completion of }  \mathcal{S}_{n,\sigma}(D_+) \text{ in the } L^2(D_+)\text{ norm}.
$$
Functions in $H_\sigma(D_+)$ are weakly divergence free in that they satisfy
\begin{equation}\label{Hincompressibility}
 \langle u,\nabla \phi \rangle_{D_+}=0\quad \text{for every}\quad  \phi \in H^1(D_+).
\end{equation}
This  holds  since $\S_{n,\sigma}(D_+)$ is dense in $H_\sigma(D_+)$,  and so for any $u\in H_{\sigma}(D_+)$ we can find $(u_n)\in\S_\sigma(D_+)$ such that $u_n\to u$ in $H^1(D_+)$. Now given $u\in H_{\sigma}(D_+)$  and any $\phi\in H^1(D_+)$ we have
 $$
 \langle u,\nabla\phi\rangle_{D_+}=\lim_{n\to\infty}\langle u_n,\nabla\phi\rangle_{D_+}=\lim_{n\to\infty}\langle\nabla\cdot u_n,\phi\rangle_{D_+}=0.
 $$
Notice that we have no boundary terms in the integration-by-parts since $u_n\cdot n=0$ on $\partial D_+$ (see for example Lemma 2.11 in Robinson et al., 2016, for more details).
%

In a slight abuse of notation we define $C_\w([0,T];H_\sigma (D_+))$ as the collection of all functions $u:[0,T]\to H_\sigma(D_+)$ that are weakly continuous into $L^2(D_+)$ i.e.
$$
 t\mapsto \langle u(t), \phi \rangle_{D_+}
$$
 is continuous for every $\phi\in L^2(D_+)$.


We define the space of test functions
\begin{equation}
\mathcal{S}_{n,\sigma}(D_+\times [0,T]):= \{\psi\in \mathcal{S}(D _+\times [0,T])\colon \nabla \cdot \psi(\cdot,t)=0\quad \psi\cdot n=0\; \mathrm{ on }\;  \partial D_+\quad\forall t\in [0,T] \}.
\end{equation}

Analogous definitions of all of the spaces above can be made for  the domain $D_-:= \t^2 \times \R_-$ (where $\R_-=(-\infty,0]$).

To obtain a weak formulation on $D_+$ assume that we have a smooth solution $u$ with pressure $p$ that satisfy the incompressible Euler equations
$$
 \begin{cases}
  \partial_t u+\nabla \cdot (u\otimes u) +\nabla p=0 &\text{in} \, D_+\\
  \nabla \cdot u=0 &\text{in} \, D_+\\
u \cdot n=0 &\text{on} \, \partial D_+,
 \end{cases}
$$
where $n$ is the outer normal to $\partial D_+$, so that for our domain the third equation simply becomes $u_3=0$ on $\partial D_+$. Taking the inner product of  the first equation with a vector-valued test function $\phi \in \mathcal{S}_{n,\sigma}(D_+\times [0,T]) $  and integrating over the time interval $(0,t)$ we obtain
$$
 \int^t_0 \langle \partial_t u+\nabla \cdot (u\otimes u) +\nabla p ,\phi \rangle_{D_+} \d \tau =0.
$$
Here $\langle \cdot,\cdot \rangle_{D_+}$ denotes the $L^2$-inner product in space as defined in \eqref{ipd}.
We can now integrate by parts and obtain
\begin{multline*}
 \langle u(t),\phi(t) \rangle_{D_+}-\langle u(0),\phi(0) \rangle_{D_+}- \int^t_0 \langle  u ,\partial_t\phi \rangle_{D_+}\d \tau  - \int^t_0 \langle  (u\otimes u):\nabla \phi \rangle_{D_+} \d \tau\\- \int_{\partial D_+ \times [0,t]} u_3 \,( u\cdot \phi) \d S_x \d t- \int ^t_0 \langle p,\nabla \cdot \phi \rangle_{D_+} \d \tau +\int_{\partial D_+\times [0,t]} p \,\phi_3\d S_x \d t =0.
\end{multline*}
We notice that both $u_3=0$ and $\phi_3=0$  on $\partial D_+$. Further, we have that $\nabla \cdot \phi=0$ in $D_+$ and so the three terms involving these expression vanish; we obtain the equation
$$
 \langle u(t),\phi(t) \rangle_{D_+}-\langle u(0),\phi(0) \rangle_{D_+}- \int^t_0 \langle  u ,\partial_t\phi \rangle_{D_+}\d \tau  - \int^t_0 \langle  (u\otimes u):\nabla \phi \rangle_{D_+} \d \tau=0.
$$
Thus we have obtained the following weak formulation of the equation, which does not involve any pressure terms.

\begin{definition}[Weak Solution on $D_+$]\label{defEulersoln}
A weak solution of the Euler equations on $ D_+ \times [0,T]$  is a  vector-valued function   $u$ in  $C_\w([0,T];H_\sigma (D_+))$
such that
\begin{multline}\label{WeaksolutionD+}
 \langle u(t),\psi(t)\rangle_{D_+}-\langle u(0),\psi(0)\rangle_{D_+}-\int^t_0\langle u(\tau),\partial_t\psi(\tau)\rangle_{D_+} \d \tau\\=\int^t_0\langle u(\tau)\otimes u(\tau):\nabla\psi(\tau)\rangle_{D_+}\d \tau,
\end{multline}
for every $t\in[0,T]$ and for all $\psi\in \mathcal{S}_{n,\sigma}(D_+ \times [0,T])$.
\end{definition}



We conclude this section making precise the specific mollification  that we will use to regularise the equation. Throughout the paper $\varphi$ will be a radially symmetric scalar function in $C_c^\infty((-\frac{1}{2},\frac{1}{2})^3)$  with $\int \varphi=1$; we set $\varphi_\eps(x)=\eps^{-3}\varphi(x/\eps)$. Then for any function $f$ we define the mollification of $f$ as $J_\eps f:= f \star \varphi_\eps$ where $\star$ denotes convolution. Thus
\begin{equation}\label{mol}
J_\eps f(x)= f \star \varphi_\eps(x):=\int_{D}\varphi_\eps(x-y)f(y)\d y= \int_{B(0,\eps)}\varphi_\eps(y)f(x-y)\d y.
\end{equation}
\noindent Notice that given the way we have defined our mollification we need the  functions to be defined on all of $D:= \t^2 \times \R$. When applying this mollification to functions only defined on  $D_+$ we will implicitly assume an extension by zero to the entirety of $D$ prior to mollifying.

\section{The reflection map}

The first step in our analysis will generate an extension of a weak solution $u$ defined in $D_+$ to a function $u_E$ defined on all of $D$. We remark that we are using the same extension considered in \cite{RRS1}. In that work part of the considerations related to this particular extension were used to handle the pressure, which is not present in our current approach.

The extension will be built out of an {\it odd} reflection  $u$ from $D_+$ to $D_-$. However, for later convenience we consider a reflection map for functions defined in the full domain $D$; we will apply this later to an extension by zero of functions defined on the half space $D_+$. 

\begin{definition}[Reflection and extension]
 Given a vector-valued function $f\colon D\to \R^3$  we define $f_R\colon D\to \R^3$ by
\begin{equation}\label{reflection}
 f_R(x,y,z):= \begin{pmatrix}
       f_1(x,y,-z)\\
          f_2(x,y,-z)\\
          -f_3(x,y,-z)\\
         \end{pmatrix}.
\end{equation}
 For a function $g\colon D_+\to \R^3 $, defined only on $D_+$ to start with, we first consider a trivial extension by zero, which by an abuse of notation we still denote by g, and define $g_R$ via \eqref{reflection}. We now define our extension $g_E$  by
\begin{equation}\label{extension}
 g_E(x,y,z):=\begin{cases}
  g(x,y,z) +g_R(x,y,z) & z\neq 0\\
  \tfrac{1}{2}(g(x,y,z) +g_R(x,y,z))=(g_1(x,y,0),g_2(x,y,0),0) & z=0.
 \end{cases}
\end{equation}


\end{definition}
In \eqref{extension} we require a separate definition for $z=0$ to preserve the value of $g$ at the boundary of $D_+$, but we still have $g_E$  equal to $g+g_R$ almost everywhere.


 Note that if $g\in \mathcal{S}_{n,\sigma}(D_+)$ then $ g_R\in \mathcal{S}_{n,\sigma}(D_-)$; similarly if $v \in  H_\sigma(D_+)$ then we have $v_R \in H_\sigma(D_-)$.

We have defined this particular extension to preserve the function's incompressibility, regularity and boundary conditions. Additionally, we chosen the mollifying kernel in \eqref{mol} so that the mollification of $v_E$ satisfies all the properties of a test function for the equation. This will allow us to use it to regularise the  equation and manipulate the terms. We summarise some of the results we will require.

\begin{lemma}\label{PropertiesofE}
If $v\in H_\sigma(D_+)$ (respectively $\mathcal{S}_{n,\sigma}(D_+)$)  then $v_E\in H_\sigma(D) $ (respectively $\mathcal{S}_{\sigma}(D)$) and  
\begin{enumerate}
	\item $\|v_E\|_{L^p(D)}\le C\|v\|_{L^p(D_+)}$; 
	\item $J_\eps(v_E)$ and $J_\eps(J_\eps(v_E))$ are incompressible in $D_+$; and
	\item  $J_\eps(v_E)\cdot n=0$ and $J_\eps(J_\eps(v_E))\cdot n=0$ on $\partial D_+$. 
\end{enumerate}
\end{lemma}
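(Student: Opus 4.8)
The plan is to verify the three properties in order, exploiting the explicit structure of the reflection/extension map in \eqref{reflection}--\eqref{extension} together with the fact that the mollifier $\varphi$ is radially symmetric and supported in $(-\tfrac12,\tfrac12)^3$. First I would record the preliminary observations that $v_E = v + v_R$ almost everywhere (the redefinition on $\{z=0\}$ affects a measure-zero set and so is irrelevant for all $L^p$-based and convolution statements) and that $v_R$ is simply $v$ composed with the reflection $\rho(x,y,z)=(x,y,-z)$ followed by the sign flip $\mathrm{diag}(1,1,-1)$ on the components. For the membership claims, if $v\in\mathcal{S}_{n,\sigma}(D_+)$ then, extending by zero, $v$ is smooth up to the boundary with all the right decay; since $v_3=0$ on $\partial D_+$ and $v$ is divergence free, the odd reflection of the third component and even reflection of the first two produce a function $v_E$ that is $C^\infty$ across $\{z=0\}$ (the odd/even parities are exactly what kill the potential jumps in the function and its normal derivatives), divergence free on all of $D$, and Schwartz-like in $z$; hence $v_E\in\mathcal{S}_\sigma(D)$. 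The statement for $v\in H_\sigma(D_+)$ then follows by density: approximate $v$ by $v_n\in\mathcal{S}_{n,\sigma}(D_+)$ in $L^2(D_+)$, note $(v_n)_E\to v_E$ in $L^2(D)$ by property (1), and conclude $v_E\in H_\sigma(D)$.

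Property (1) is the easy one: since $v_E = v+v_R$ a.e., $\|v_E\|_{L^p(D)}\le \|v\|_{L^p(D)}+\|v_R\|_{L^p(D)}$, and because $\rho$ is a measure-preserving bijection of $D$ and the sign flip does not change $|v_R|$ pointwise, $\|v_R\|_{L^p(D)}=\|v\|_{L^p(D_+)}$ (recall $v$ is extended by zero, so it is supported in $D_+$ and $v_R$ in $D_-$). This gives the claim with $C=2$ (indeed $C=2^{1/p}$ on noting the supports are essentially disjoint). For property (2), the key point is that convolution commutes with differentiation, so $\nabla\cdot J_\eps(v_E) = J_\eps(\nabla\cdot v_E)$ in the interior of $D$, and $\nabla\cdot v_E=0$ there (this is exactly where the definition of the extension was engineered); restricting to $D_+$ gives incompressibility of $J_\eps(v_E)$ in $D_+$. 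Applying $J_\eps$ once more and using the same commutation gives incompressibility of $J_\eps(J_\eps(v_E))$ as well. One must be a little careful that $v_E$ may only be weakly divergence free (when $v\in H_\sigma$), but the identity $\langle J_\eps(v_E),\nabla\phi\rangle = \langle v_E, \nabla J_\eps\phi\rangle$ for test $\phi$ and \eqref{Hincompressibility} handle this; alternatively one argues by density from the smooth case.

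Property (3) is the main obstacle, since it is the one genuinely using the radial symmetry of $\varphi$ and the interplay between the reflection and the mollification. The claim is that $J_\eps(v_E)$ has vanishing normal (i.e. third) component on $\{z=0\}$. The idea is that $v_E$ has odd third component and even first/second components under $z\mapsto -z$; since $\varphi_\eps$ is even in $z$ (being radial), the convolution $J_\eps(v_E)$ inherits the same parity: $(J_\eps(v_E))_3$ is odd in $z$, hence vanishes at $z=0$. Concretely, writing out \eqref{mol} and substituting $y_3\mapsto -y_3$ in the integral, using $\varphi_\eps(x_1,x_2,-x_3)=\varphi_\eps(x_1,x_2,x_3)$ and $(v_E)_3(x_1,x_2,-x_3)=-(v_E)_3(x_1,x_2,x_3)$, shows $(J_\eps(v_E))_3(x',0)=-(J_\eps(v_E))_3(x',0)$, so it is zero. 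The double mollification $J_\eps(J_\eps(v_E))$ is handled by iterating: $J_\eps(v_E)$ again has odd third component in $z$ (the parity is preserved on all of $D$, not just at the boundary, by the same substitution argument), so a second application of the even kernel again yields an odd, hence boundary-vanishing, third component. I would take care to state the parity of $v_E$ as a standalone observation up front, since it drives both the smoothness claim and property (3), and is the structural heart of why this particular extension was chosen.
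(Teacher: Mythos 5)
Your arguments for the three numbered properties are essentially the paper's own: the $L^p$ bound via the measure-preserving reflection and (essentially) disjoint supports, incompressibility of $J_\eps(v_E)$ by moving the mollifier onto the test function and invoking the weak divergence-freeness of $v_E$, and the vanishing of the normal component from the odd parity of $(v_E)_3$ in $x_3$ against the even kernel, iterated for the double mollification. All of that is fine.

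There is, however, one genuinely incorrect step: the claim that for $v\in\mathcal{S}_{n,\sigma}(D_+)$ the extension $v_E$ is $C^\infty$ across $\{z=0\}$ because ``the odd/even parities are exactly what kill the potential jumps in the function and its normal derivatives.'' They do not. The tangential components are extended evenly, $v_{E,i}(x,y,z)=v_i(x,y,-z)$ for $z<0$ and $i=1,2$, so $\partial_z v_{E,i}(x,y,0^-)=-\partial_z v_i(x,y,0^+)$; the normal derivative of the tangential components is continuous across the interface only if $\partial_3 v_i$ vanishes on $\partial D_+$, which is not part of the hypotheses (and similarly the second normal derivative of the odd-reflected third component generically jumps). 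So $v_E$ is in general only Lipschitz across $\{z=0\}$, not smooth. What \emph{is} true, and what the rest of the paper actually needs, is that $v_E$ is weakly divergence free: the divergence vanishes classically on either side of the interface, and the normal component $(v_E)_3$ has no jump across $\{z=0\}$ since $v_3=0$ there, so no surface distribution appears. The paper's proof avoids your difficulty by treating only the $H_\sigma$ case and arguing by density ($v_n+v_{n,R}\to v_E$ in $L^2(D)$ with each approximant weakly divergence free); your density argument for $H_\sigma(D)$ membership is the same and survives, but you should drop the $C^\infty$ claim and replace it with the weak-divergence statement, since your subsequent steps (in particular property 2) only use that weaker fact anyway.
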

\begin{proof}
We consider only the case $v\in H_\sigma(D_+)$. Given the initial extension of $g$ by zero, and that as remarked before $v_R \in H_\sigma(D_-)$, we only need to show that $v_E$ remains weakly incompressible. 
Since $v\in H_\sigma(D_+)$ there exists $v_n\in \S_{n,\sigma}(D_+)$ such that $v_n\to v $ in $L^2(D_+)$. Clearly $v_{n,R}\in \S_{n,\sigma}(D_-)$ and $v_{n,R}\to v_R$ in $L^2(D_-)$. Therefore $v_R\in H_\sigma(D_-)$. Further, $v_n+v_{n,R}$ trivially belongs to $S_\sigma (D)$ and is divergence free. Since $v_n+v_{n,R}$ converges to $v_E$ in $L^2(D)$ we obtain the desired result

Estimate {\it 1} follows easily
\begin{equation}
\|v_E\|_{L^p(D)} =  \|v +v_R\|_{L^p(D)}\le \|v\|_{L^p(D_+)} +\|v_R\|_{L^p(D_-)} \le 2 \|v\|_{L^p(D_+)}
\end{equation}
as $ \|v_R\|_{L^p(D_-)}=\|v\|_{L^p(D_+)}$.

In order to prove {\it 2}, since the extension is weakly incompressible we have that  $J_\eps(u_E)$
is strongly incompressible. To show this note that $v_E\in H_\sigma(D)$ and so $\langle v_E, \nabla \phi \rangle_D=0$ for all $\phi \in \mathcal{S}(D)$. We can let $\phi = J_\eps \eta \text{ or } J_\eps J_\eps \eta $ and thus
$$
	 0=\langle v_E, \nabla J_\eps \eta \rangle_D= \langle  J_\eps v_E, \nabla  \eta \rangle_D=\langle  \nabla \cdot  J_\eps v_E,  \eta \rangle_D.
$$
As this holds for all $\eta \in \mathcal{S}(D)$ we have that $ J_\eps v_E$
is strongly incompressible in $D_+$. We argue similarly for $J_\eps J_\eps v_E$.

To show {\it 3} we will first show that $J_\eps(v_E)_3=0$ on $\partial D_+$. Note that this is the same as $J_\eps((v_E)_3)=0$. As our extension is an odd function in the third component and $\varphi_\eps$ is an even function in the third component we have that the integral over the ball centered around the boundary is zero. We argue similarly for $J_\eps J_\eps v_E$.
\end{proof}

We now define
$$
D_{>s}:=\{x\in D : x_3> s\}.
$$
Notice that estimate {\it 1} in the previous Lemma holds for these domains as well. In fact, for any $\delta>0$ we have
$$
\|u_E\|_{L^p(D_{>-\delta})}\le \|u_E\|_{L^p(D)} \le  C\|u\|_{L^p(D_+)}.
$$

%

\begin{lemma}\label{Mollifierconvegence}
Given $u \in L^p(D_+)$ with $1\le p<\infty$ we have  $\|J_\eps(u_E)-u\|_{L^p(D_+)} \to  0$ and
 $\|J_\eps J_\eps(u_E)-u\|_{L^p(D_+)} \to  0$.
\end{lemma}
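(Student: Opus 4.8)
The plan is to deduce this from the standard mollification convergence result on the full domain $D=\t^2\times\R$, using the extension bound (estimate \emph{1}) from Lemma \ref{PropertiesofE}. First I would recall that for any $w\in L^p(D)$ with $1\le p<\infty$ we have $\|J_\eps w-w\|_{L^p(D)}\to 0$; this is classical (density of $C_c^\infty$ in $L^p$ together with the uniform bound $\|J_\eps w\|_{L^p}\le\|w\|_{L^p}$, which holds because $\int\varphi_\eps=1$ and $\varphi_\eps\ge 0$, by Young's inequality). Applying this to $w=u_E\in L^p(D)$ — which lies in $L^p(D)$ by estimate \emph{1}, since $u\in L^p(D_+)$ — gives $\|J_\eps(u_E)-u_E\|_{L^p(D)}\to 0$. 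Restricting the domain of integration only decreases the norm, so $\|J_\eps(u_E)-u_E\|_{L^p(D_+)}\to 0$.

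Next I would observe that, by the construction of the extension \eqref{extension}, $u_E=u$ almost everywhere on $D_+$ (the two formulas in \eqref{extension} agree a.e., and $g_R$ is supported on $D_-$ after the extension-by-zero, so on $D_+\setminus\partial D_+$ one has $u_E=u+u_R=u+0=u$; the boundary $z=0$ has measure zero). Hence $\|J_\eps(u_E)-u_E\|_{L^p(D_+)}=\|J_\eps(u_E)-u\|_{L^p(D_+)}$, which proves the first claim.

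For the second claim, I would write
$$
\|J_\eps J_\eps(u_E)-u\|_{L^p(D_+)}\le \|J_\eps\big(J_\eps(u_E)-u_E\big)\|_{L^p(D_+)}+\|J_\eps(u_E)-u\|_{L^p(D_+)},
$$
using again that $u_E=u$ a.e.\ on $D_+$ inside the first mollification is not quite what is needed; rather I keep $u_E$ as the "inner" function on all of $D$. The second term tends to $0$ by the first part of the lemma. For the first term I use $\|J_\eps h\|_{L^p(D_+)}\le\|J_\eps h\|_{L^p(D)}\le\|h\|_{L^p(D)}$ (Young's inequality, as above) with $h=J_\eps(u_E)-u_E\in L^p(D)$, so that it is bounded by $\|J_\eps(u_E)-u_E\|_{L^p(D)}$, which tends to $0$ by the full-space convergence established in the first paragraph. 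Adding the two pieces completes the proof.

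The only mildly delicate point is bookkeeping about where each function lives: the mollifier $J_\eps$ is defined on $D$, so every convergence statement should first be established on $D$ and then restricted to $D_+$, and one must use $u_E=u$ a.e.\ on $D_+$ to translate back to $u$. There is no real analytic obstacle here — the content is entirely the classical approximate-identity property plus the linear bound $\|u_E\|_{L^p(D)}\le C\|u\|_{L^p(D_+)}$ already recorded in Lemma \ref{PropertiesofE}.
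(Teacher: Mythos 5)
Your proposal is correct and follows essentially the same route as the paper: both use that $u_E=u$ a.e.\ on $D_+$ to reduce to $\|J_\eps(u_E)-u_E\|_{L^p(D)}\to 0$, which is the classical approximate-identity property. The paper handles the double mollification with a terse ``similarly''; your explicit triangle-inequality argument with Young's inequality is just a spelled-out version of the same step.
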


\begin{proof}
 The result follows easily by noticing that in $D_+$ we have $u_E=u$ and therefore
$$
  \|J_\eps (u_E)-u\|_{L^p(D_+)}=\| J_\eps(u_E)-u_E\|_{L^p(D_+)} \leq \| J_\eps(u_E)-u_E\|_{L^p(D)}.
$$
The result now follows by standard properties of mollifiers. Similarly for $J_\eps J_\eps u_E$.
\end{proof}

We conclude this section with a lemma collecting various results for the reflection map that will be used later.

\begin{lemma}\label{Rsymmetry}
 For any functions $u$ and $v$ on $D$
$$
  \langle u,v_R\rangle_{\t^2\times (-\delta,\delta)}=\langle u_R,v \rangle_{\t^2\times (-\delta,\delta)}
$$
for any $\delta>0$.
In addition
$$
  J_\eps (f_R)(x)=(J_\eps f)_R(x)
$$
and thus
$$
  \langle J_\eps u, J_\eps v_R\rangle_{\t^2\times (-\delta,\delta)}=\langle J_\eps u_R,J_\eps v \rangle_{\t^2\times (-\delta,\delta)}.
$$
\end{lemma}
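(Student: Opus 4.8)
The plan is to prove the three identities in turn, each reducing to an elementary symmetry of either the reflection \eqref{reflection} or of the mollifying kernel $\varphi_\eps$. For the first identity I would write out both sides using the componentwise definition \eqref{reflection}: with $x=(x_1,x_2,x_3)$ and $\bar x:=(x_1,x_2,-x_3)$, the integrand of $\langle u, v_R\rangle_{\t^2\times(-\delta,\delta)}$ is $u_1(x)v_1(\bar x)+u_2(x)v_2(\bar x)-u_3(x)v_3(\bar x)$. The change of variables $x_3\mapsto -x_3$ leaves the slab $\t^2\times(-\delta,\delta)$ invariant and has unit Jacobian, and it converts this integrand into $u_1(\bar x)v_1(x)+u_2(\bar x)v_2(x)-u_3(\bar x)v_3(x)$ over the same region; by \eqref{reflection} this is precisely $\langle u_R, v\rangle_{\t^2\times(-\delta,\delta)}$. (The manipulation is legitimate whenever $u\cdot v_R\in L^1(\t^2\times(-\delta,\delta))$, which holds in all of our applications; otherwise the identity is read formally.)

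For the commutation $J_\eps(f_R)=(J_\eps f)_R$, the key point is that $\varphi_\eps$ is radially symmetric, hence in particular even in its third argument. Expanding $J_\eps(f_R)$ componentwise via \eqref{mol} and substituting $w_3\mapsto -w_3$ in the convolution integral, the evenness of $\varphi_\eps$ in its third slot transfers the sign change from the kernel onto the evaluation point, giving $(J_\eps f_R)_j(x_1,x_2,x_3)=(J_\eps f_j)(x_1,x_2,-x_3)$ for $j=1,2$ and $(J_\eps f_R)_3(x_1,x_2,x_3)=-(J_\eps f_3)(x_1,x_2,-x_3)$; comparison with \eqref{reflection} shows the right-hand side equals $(J_\eps f)_R$. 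The periodic convolution in the two torus directions plays no role here, since the reflection acts only on the third coordinate.

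The third identity then follows by chaining the previous two: with $f=v$, the commutation gives $J_\eps v_R=(J_\eps v)_R$; applying the first identity with $u,v$ replaced by $J_\eps u, J_\eps v$ rewrites $\langle J_\eps u,(J_\eps v)_R\rangle$ as $\langle (J_\eps u)_R, J_\eps v\rangle$; and the commutation with $f=u$ turns $(J_\eps u)_R$ into $J_\eps u_R$, yielding $\langle J_\eps u, J_\eps v_R\rangle_{\t^2\times(-\delta,\delta)}=\langle J_\eps u_R, J_\eps v\rangle_{\t^2\times(-\delta,\delta)}$. I do not anticipate any genuine obstacle: the statement is essentially bookkeeping, and the only points needing a little care are tracking the minus sign in the third component of the reflection throughout and noting the mild integrability required to justify the change of variables in the first step.
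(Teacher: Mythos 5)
Your proposal is correct and follows essentially the same route as the paper: the first identity via the change of variables $x_3\mapsto-x_3$ combined with the sign pattern of the reflection, the commutation $J_\eps(f_R)=(J_\eps f)_R$ via the evenness of $\varphi_\eps$ in its third argument, and the final identity by chaining the two. You in fact spell out the commutation step in more detail than the paper, which simply calls it a direct calculation.
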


\begin{proof}
The first part follows by a simple change of variables of $x_3$ to $-x_3$, using the symmetry of the domain of integration and the definition of the reflection map.
More precisely, using the notation $x=(\tilde x,x_3)$ we can use the change of variables $x_3=-\xi_3$ so that
\begin{align*}
  \langle u,v_R\rangle_{\t^2\times(-\delta,\delta)}&= \int_{\t^2} \int^\delta_{-\delta}u_i(\tilde x,x_3) v_{Ri}(\tilde x,x_3)\d x_3 \d \tilde x =
 \int_{\t^2} \int^\delta_{-\delta}u_i(\tilde x,-\xi_3) {v_{R}}_i(\tilde x,-\xi_3)\d \xi_3 \d \tilde x \\
&=\int_{\t^2} \int^\delta_{-\delta}{u_{R}}_i(\tilde x,\xi_3) v_{i}(\tilde x,\xi_3)\d \xi_3 \d \tilde x = \langle u_R,v\rangle_{\t^2\times(-\delta,\delta)}.
\end{align*}

The result  $J_\eps (f_R)=(J_\eps f)_R$ follows by a direct calculation (given the properties of our mollifying kernel), and for the final equality we apply the first part to $J_\eps u$ and $J_\eps v$.
%
\end{proof}

\section{Using $J_\eps J_\eps u_E$ as a test function}\label{WSM}

We will show that if $u$ is a weak solution then in fact \eqref{WeaksolutionD+} holds for a larger class of test functions with less time regularity. We denote by $C^{0,1}([0,T];H_\sigma)$ the space of Lipschitz functions from $[0,T]$ into $H_\sigma$. Here we extend the results in \cite{RRS1}, highlighting only the changes and generalisations needed because of the boundary.

\begin{lemma}\label{generalpsi}
	If $u$ is a weak solution of the Euler equations on $D_+$ then \eqref{WeaksolutionD+} holds for every $\psi\in\mathcal{L}_{n,\sigma}$, where
	\begin{equation*}
	\mathcal{L}_{n,\sigma}:=\{\psi\in L^1(0,T;H^3)\cap C^{0,1}([0,T];H_\sigma): \psi \cdot n=0 \;\mathrm{ on }\; \partial D_+\}.
	\end{equation*}
\end{lemma}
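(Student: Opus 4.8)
The plan is to argue by density: we have the weak formulation \eqref{WeaksolutionD+} for all $\psi\in\mathcal{S}_{n,\sigma}(D_+\times[0,T])$, and we want to extend it to all $\psi\in\mathcal{L}_{n,\sigma}$. Fix such a $\psi$. Since $\psi(\cdot,t)\in H^3(D_+)$ for a.e.\ $t$ with $\psi\cdot n=0$ on $\partial D_+$, and $\psi\in C^{0,1}([0,T];H_\sigma)$, I would first mollify in the spatial variables using the reflect-and-mollify construction of Section 3: set $\psi^\eps(\cdot,t):=J_\eps J_\eps\big(\psi(\cdot,t)_E\big)$. By Lemma~\ref{PropertiesofE}, $\psi^\eps(\cdot,t)$ is incompressible in $D_+$ and satisfies $\psi^\eps\cdot n=0$ on $\partial D_+$; it is also smooth in $x$ with the required Schwartz-like decay (the extension preserves the decay, and mollification preserves it). To get smoothness in $t$ as well, mollify in time: with a standard one-dimensional mollifier $\rho_\delta$ acting on $t$ (extending $\psi$ suitably near $t=0,T$, or working on a slightly shrunk interval and then passing to the limit), set $\psi^{\eps,\delta}:=\rho_\delta *_t \psi^\eps$. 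Because the spatial operations $J_\eps$, $(\cdot)_E$ and time convolution commute and are all linear, $\psi^{\eps,\delta}$ still lies in $\mathcal{S}_{n,\sigma}(D_+\times[0,T])$ (modulo the endpoint issue), so \eqref{WeaksolutionD+} holds with $\psi$ replaced by $\psi^{\eps,\delta}$.

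Next I would pass to the limit $\delta\to0$ and then $\eps\to0$ in each term of \eqref{WeaksolutionD+}. The terms $\langle u(t),\psi^{\eps,\delta}(t)\rangle_{D_+}$ and $\langle u(0),\psi^{\eps,\delta}(0)\rangle_{D_+}$ converge because, for fixed $t$, $\psi^{\eps,\delta}(t)\to\psi(t)$ in $L^2(D_+)$ (Lemma~\ref{Mollifierconvegence} for the spatial mollification, continuity of $t\mapsto\psi(t)$ in $H_\sigma$ for the time mollification) and $u(t)\in L^2(D_+)$; here one must be a little careful about the values at the endpoints $t=0,T$, which is where a one-sided time mollifier or a limiting argument on $[\eta,T-\eta]$ is needed. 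For the term $\int_0^t\langle u,\partial_t\psi^{\eps,\delta}\rangle\,\d\tau$ I would use that $\partial_t\psi^{\eps,\delta}=\rho_\delta*_t\big(J_\eps J_\eps(\partial_t\psi)_E\big)$ in the sense that the weak time-derivative of the Lipschitz map $t\mapsto\psi(t)$ is in $L^\infty(0,T;H_\sigma)$, so $\partial_t\psi^{\eps,\delta}$ is bounded in $L^\infty(0,T;L^2)$ and converges to $\partial_t\psi$ in, say, $L^1(0,T;L^2(D_+))$; combined with $u\in L^\infty(0,T;L^2)\subset L^3(0,T;L^2)$ (or simply the a.e.\ weak continuity of $u$) this gives convergence of the integral. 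The nonlinear term $\int_0^t\langle u\otimes u:\nabla\psi^{\eps,\delta}\rangle\,\d\tau$ is handled by noting $u\otimes u\in L^1$ in $x$ for a.e.\ $t$ (indeed $u\in L^2$), while $\nabla\psi^{\eps,\delta}\to\nabla\psi$ uniformly on $D_+$ for a.e.\ $t$ (spatial mollification of an $H^3$, hence continuously differentiable after two mollifications — actually one only needs $\nabla\psi^{\eps,\delta}\to\nabla\psi$ in $L^\infty$ in $x$, which follows from $\psi(t)\in H^3\hookrightarrow C^1$); uniform-in-time domination comes from the Lipschitz bound, so dominated convergence applies.

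The main obstacle, and the only place requiring genuine care, is the interplay of the reflection/extension map with the mollification at the boundary and the endpoint behaviour in time. For the spatial part, one must check that $\psi(\cdot,t)_E$ inherits enough regularity to be a legitimate element on which $J_\eps J_\eps$ produces a bona fide test function — here the hypothesis $\psi\cdot n=0$ on $\partial D_+$ is exactly what makes the odd extension of the normal component continuous (indeed $H^3$-compatible across $\{x_3=0\}$), so that $\psi_E$ does not develop a singular derivative at the boundary; this is the analogue of the reasoning already used in Section 3 and Lemma~\ref{PropertiesofE}. For the time endpoints, since $\psi$ is only assumed Lipschitz into $H_\sigma$ (not compactly supported in $(0,T)$), a naive symmetric mollification $\rho_\delta*_t\psi$ is not defined near $t=0$ and $t=T$; the fix is to first establish \eqref{WeaksolutionD+} for all $t\in(0,T)$ using mollifications supported in $(0,T)$, and then recover $t=0$ and $t=T$ by weak continuity of both sides, or alternatively to extend $\psi$ past the endpoints by its Lipschitz-constant-controlled reflection in $t$. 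Once these two points are dealt with, all the limits above are routine applications of the lemmas already proved in Sections 2--3 together with standard mollifier estimates, and the proof of Lemma~\ref{generalpsi} is complete.
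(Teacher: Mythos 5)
Your overall strategy --- approximate $\psi$ by elements of $\mathcal{S}_{n,\sigma}(D_+\times[0,T])$ and pass to the limit in every term of \eqref{WeaksolutionD+} --- is the same density argument the paper uses, but the paper packages it more abstractly and thereby avoids the construction you attempt: it writes \eqref{WeaksolutionD+} as $E(\psi)=0$ for a linear functional $E$, proves the three one-line bounds giving $|E(\psi)|\le C\|u\|_{L^\infty(0,T;L^2)}\|\psi\|_{C^{0,1}([0,T];L^2)}+C\|u\|^2_{L^\infty(0,T;L^2)}\|\psi\|_{L^1(0,T;H^3)}$, and then simply invokes density of $\mathcal{S}_{n,\sigma}$ in $\mathcal{L}_{n,\sigma}$ with respect to that norm. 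No approximating sequence is exhibited and no term-by-term limit is taken. Your proposal is, in effect, an attempt to prove that density statement explicitly, and this is where it runs into trouble.

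Two steps fail as written. First, the assertion that $\psi\cdot n=0$ makes $\psi(\cdot,t)_E$ ``$H^3$-compatible across $\{x_3=0\}$'' is false: the even reflection of the tangential components is $H^2$ across the boundary only if $\partial_3\psi_1=\partial_3\psi_2=0$ there, and the odd reflection of $\psi_3$ is $H^3$ only if $\partial_3^2\psi_3=0$ there; in general $\psi_E$ is merely $H^1(D)$. Consequently $\nabla J_\eps J_\eps(\psi_E)$ does \emph{not} converge to $\nabla\psi$ uniformly on $D_+$ up to the boundary --- at a boundary point the mollification of $\partial_3(\psi_E)_1$ converges to the average of the two one-sided traces, which is $0$ rather than $\partial_3\psi_1$ --- and your approximants do not converge in $L^1(0,T;H^3(D_+))$. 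The nonlinear term can still be rescued, but by a different mechanism: one has the uniform bound $\|\nabla J_\eps J_\eps(\psi_E)\|_{L^\infty(D)}\le\|\nabla\psi\|_{L^\infty(D_+)}\le C\|\psi\|_{H^3}$ together with a.e.\ convergence on $D_+$, and then dominated convergence against $u\otimes u\in L^\infty(0,T;L^1(D_+))$. Second, a function in $H^3(D_+)$ has no Schwartz-like decay in $x_3$, so $J_\eps J_\eps(\psi_E)$ is smooth, divergence free and tangent to the boundary but is \emph{not} an element of $\mathcal{S}_{n,\sigma}(D_+\times[0,T])$ as defined in \eqref{Schwartzx3def}; an additional truncation at large $x_3$ is needed, and since a plain cutoff destroys incompressibility this requires a further correction. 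Neither defect is fatal to the strategy, but both must be repaired before the identity for $\psi^{\eps,\delta}$ and the subsequent limit passage are legitimate.
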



 \begin{proof} For a fixed $u$ we can write \eqref{WeaksolutionD+} as $E(\psi)=0$ for every $\psi\in \mathcal{S}_{n,\sigma}$, where
 	\begin{align*}
 	E(\psi):=\langle u(t),\psi(t)\rangle_{D_+}-\langle u(0),\psi(0)\rangle_{D_+}-\int^t_0\langle u(\tau),&\partial_t\psi(\tau)\rangle_{D_+} \d \tau\\
 	&-\int^t_0\langle u(\tau)\otimes u(\tau):\nabla\psi(\tau)\rangle_{D_+}\d \tau.
 	\end{align*}
 	Since $E$ is linear in $\psi$, and  $\mathcal{S}_{n,\sigma}$ is dense in $\mathcal{L}_{n,\sigma}$ with respect to the norm
 	\begin{equation*}
 	\|\psi\|_{L^1(0,T;H^3)}+\|\psi\|_{C^{0,1}([0,T];L^2)},
 	\end{equation*}
 	to complete the proof it suffices to show that $\psi\mapsto E(\psi)$ is bounded in this norm. We proceed term-by-term:
 	\begin{align*}
 	\left|\langle u(t),\psi(t)\rangle_{D_+}-\langle u(0),\psi(0)\rangle_{D_+}\right|&\le 2\|u\|_{L^\infty(0,T;L^2)}\|\psi\|_{L^\infty(0,T;L^{2})},\\
 	\left|\int^t_0\langle u(\tau),\partial_\tau\psi(\tau)\rangle_{D_+} \d \tau\right|&\le \|u\|_{L^\infty(0,T;L^2)}\|\psi\|_{C^{0,1}([0,T];L^2)},\\
 	\left|\int^t_0\langle u(\tau)\otimes u(\tau):\nabla\psi(\tau)\rangle_{D_+}\d \tau\right|&\le \|u\|^2_{L^\infty(0,T;L^2)}\|\nabla\psi\|_{L^{1}(0,T;L^{\infty})}.
 	\end{align*}
 (For details of the second of these estimates see \citealp{Skipper} or \citealp{RRS1}.)  	It follows that
 	\begin{equation*}
 	|E(\psi)|\le C \|u\|_{L^\infty(0,T;L^2)}\|\psi\|_{C^{0,1}([0,T];L^{2})} +C\|u\|^2_{L^\infty(0,T;L^2)}\|\psi\|_{L^{1}(0,T;H^3)}
 	\end{equation*}
 	and so we obtain the desired result. Note that $\psi\cdot n=0$ is preserved as $H^3\subset C^0$ in three dimensions.
 \end{proof}

We now study the time regularity of $u$ when paired with a sufficiently smooth function that is not necessarily divergence free.

\begin{lemma}\label{uLipcitz}
	If $u$ is a weak solution on $D_+$ then
	\begin{equation}\label{Lipschitzintime}
	|\langle u(t)-u(s),\psi\rangle_{D_+}|\le C|t-s|\quad \text{for} \, \text{all} \quad \psi\in \mathcal{S}(D_+),
	\end{equation}
	where $C$ depends only on $\|u\|_{L^\infty(0,T;L^2)}$ and $\|\psi\|_{H^3}$. Further, we have
	\begin{equation}\label{LipschitzintimeD}
	|\langle u(t)-u(s),\psi\rangle_{D}|\le C|t-s|\quad \text{for} \, \text{all} \quad \psi\in \mathcal{S}(D).
	\end{equation}
\end{lemma}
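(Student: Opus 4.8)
The plan is to exploit the fact that a weak solution is insensitive to the gradient part of a test function: since $u(\tau)\in H_\sigma(D_+)$ for every $\tau$, the weak incompressibility \eqref{Hincompressibility} lets us subtract any gradient $\nabla q$ (with $q\in H^1(D_+)$) from $\psi$ without altering $\langle u(\tau),\psi\rangle_{D_+}$. I would therefore replace $\psi$ by its Leray--Helmholtz projection $\Psi$ onto divergence-free fields with vanishing normal trace, observe that $\Psi$ — being constant in time — is an admissible test function in the sense of Lemma~\ref{generalpsi}, and then read the Lipschitz estimate straight off the extended weak formulation with a time-independent test function.

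Concretely, first I would solve the Neumann problem $\Delta q=\nabla\cdot\psi$ in $D_+$ with $\partial_{x_3}q=\psi_3$ on $\partial D_+$, and set $\Psi:=\psi-\nabla q$, so that $\nabla\cdot\Psi=0$ in $D_+$ and $\Psi\cdot n=0$ on $\partial D_+$. Because the boundary $\t^2\times\{0\}$ is flat and the data $\nabla\cdot\psi$, $\psi_3$ decay rapidly in $x_3$, standard elliptic regularity (for instance, an even reflection of $q$ in $x_3$ reduces this to a problem on $\t^2\times\R$ solvable by Fourier series in $\t^2$) yields $q\in H^1(D_+)$, $\nabla q\in H^3(D_+)$, $\Psi\in H^3(D_+)\cap H_\sigma(D_+)$ with $\|\Psi\|_{H^3(D_+)}\le C\|\psi\|_{H^3(D_+)}$; hence, viewed as independent of time, $\Psi\in\mathcal{L}_{n,\sigma}$. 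Applying \eqref{Hincompressibility} with $\phi=q$ to $u(t)$ and to $u(s)$ gives $\langle u(t)-u(s),\psi\rangle_{D_+}=\langle u(t)-u(s),\Psi\rangle_{D_+}$. Now Lemma~\ref{generalpsi} applies with the test function $\psi(\tau)\equiv\Psi$, and since $\partial_t\Psi=0$, writing \eqref{WeaksolutionD+} once at time $t$ and once at time $s$ and subtracting (the terms at $\tau=0$ cancel) gives
$$
 \langle u(t)-u(s),\Psi\rangle_{D_+}=\int_s^t\langle u(\tau)\otimes u(\tau):\nabla\Psi\rangle_{D_+}\d\tau .
$$
Estimating $|\langle u\otimes u:\nabla\Psi\rangle_{D_+}|\le\|\nabla\Psi\|_{L^\infty(D_+)}\|u\|_{L^2(D_+)}^2$ and using the three-dimensional embedding $H^3(D_+)\hookrightarrow W^{1,\infty}(D_+)$ together with the elliptic bound for $\Psi$, this yields
$$
 |\langle u(t)-u(s),\psi\rangle_{D_+}|\le C|t-s|\,\|u\|_{L^\infty(0,T;L^2)}^2\,\|\psi\|_{H^3(D_+)},
$$
which is \eqref{Lipschitzintime}; here $\|u\|_{L^\infty(0,T;L^2)}<\infty$ because a weakly continuous family $[0,T]\to H_\sigma(D_+)$ is bounded, by the uniform boundedness principle.

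Finally, \eqref{LipschitzintimeD} follows at once: since $u$ is understood to be extended by zero outside $D_+$, for any $\psi\in\mathcal{S}(D)$ one has $\langle u(t)-u(s),\psi\rangle_D=\langle u(t)-u(s),\psi|_{D_+}\rangle_{D_+}$, and $\psi|_{D_+}\in\mathcal{S}(D_+)$ with $\|\psi|_{D_+}\|_{H^3(D_+)}\le\|\psi\|_{H^3(D)}$, so the bound reduces to \eqref{Lipschitzintime}. The one step that needs genuine care is the Helmholtz decomposition on the unbounded domain $D_+$ and the accompanying $H^3$ bound on $\Psi$; this is routine here precisely because the boundary is flat (no corner singularities) and the Schwartz-type decay of $\psi$ in $x_3$ rules out any pathology at infinity, so the Neumann solve and its regularity estimates are entirely standard.
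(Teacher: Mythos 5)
Your proposal is correct and follows essentially the same route as the paper: both reduce $\psi$ to its divergence-free, tangential part via the Helmholtz/Leray decomposition (you construct it explicitly through a Neumann problem, the paper cites boundedness of the Leray projector in $H^3$), discard the gradient part using weak incompressibility, and then read off the Lipschitz bound from the weak formulation with a time-independent test function, estimating the nonlinear term by $\|u\|_{L^\infty(0,T;L^2)}^2\|\nabla\Psi\|_{L^\infty}|t-s|$. The passage to \eqref{LipschitzintimeD} via the support of $u$ also matches the paper.
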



We remark that inequality \eqref{Lipschitzintime} holds for $\psi\in H^3(D_+)$, while \eqref{LipschitzintimeD} holds for $\psi\in H^3(D)$ as those are the norms of $\psi$ that appear in $C$. Therefore we can use density to extend the lemma to these larger spaces of functions.

 \begin{proof}
 	First observe that
 	any $\psi\in \mathcal{S}(D_+)$ can be decomposed as
 	\begin{equation*}
 	\psi=\eta+\nabla \sigma,
 	\end{equation*}
 	where $\eta,\sigma\in \mathcal{S}(D_+)$ and $\eta$ is divergence free with $\eta\cdot n=0$ on $\partial D_+$ (see Theorem 2.16 in Chapter 2 of \cite{RRS}, for example). Furthermore we have the bound
 	\begin{equation*}
 	\|\nabla \eta\|_{L^\infty}\le \|\nabla \eta \|_{H^2} \le \|\eta\|_{H^3} \le C\|\psi\|_{H^3}.
 	\end{equation*}
 	Here we have used the fact that the Leray projector (the map $\phi\mapsto \eta$) is bounded in $H^s$ for any $s\ge 0$ (see, for example, Chapter 2 and 3 of \cite{lions1997mathematical} or Chapter 2 of \cite{RRS}) and that $H^2(D_+)\subset L^\infty(D_+)$. Since $u(t)$ is weakly incompressible for every $t\in [0,T]$,
 	 we have
 	\begin{equation*}
 	\langle u(t)-u(s),\psi \rangle_{D_+}= \langle u(t)-u(s),\eta+\nabla\sigma \rangle_{D_+}=\langle u(t)-u(s),\eta \rangle_{D_+}.
 	\end{equation*}
 	Since $\eta\in \mathcal{S}_{n,\sigma}(D_+)$ and $\partial _t\eta=0$ it follows from the definition of a weak solution at times $t$ and $s$ that
 	\begin{equation*}
 	\langle u(t)-u(s),\psi \rangle_{D_+} =\int^t_s\langle u(\tau)\otimes u(\tau):\nabla \eta \rangle_{D_+} \d \tau
 	\end{equation*}
 	and hence
 	\begin{equation}\label{phew}
 	|\langle u(t)-u(s),\psi \rangle_{D_+}|\le \|u\|^2_{L^\infty(0,T;L^2(D_+))}\|\nabla \eta\|_{L^\infty(D_+)}|t-s|,
 	\end{equation}
 	which gives \eqref{Lipschitzintime}.
  	Note that as the support of $u$ is $D_+$ we have
 	\begin{equation}
 	|\langle u(t)-u(s),\psi\rangle_{D}|\le C|t-s|\quad \text{for} \, \text{all} \quad \psi\in \mathcal{S}(D),
 	\end{equation}
 	concluding the proof.
 \end{proof}

%
%
A striking corollary of this weak continuity in time is that a mollification of the velocity field  \emph{in space alone} yields a function that is Lipschitz continuous \emph{in time}.

\begin{corollary}\label{Testfunction}
If $u$ is a weak solution on $D_+$ then
	for any $\eps>0$ the functions $J_\eps (u_{E})(x,\cdot)$ and $J_\eps J_\eps (u_{E})(x,\cdot)$ are Lipschitz continuous in $t$ as a function into $L^2(D_+)$:
	\begin{equation}\label{LipschitzL2J}
	\|J_\eps(u_{E})(\cdot,t)-J_\eps (u_{E})(\cdot,s)\|_{L^2(D_+)}\le C_\eps|t-s|,
	\end{equation}
	and
	\begin{equation}\label{LipschitzL2JJ}
	\|J_\eps J_\eps(u_{E})(\cdot,t)-J_\eps J_\eps(u_{E})(\cdot,s)\|_{L^2(D_+)}\le C_\eps|t-s|.
	\end{equation}
	Furthermore, $J_\eps(u_{E}),J_\eps J_\eps(u_{E})\in \mathcal{L}_{n,\sigma}$.
\end{corollary}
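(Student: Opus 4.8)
The plan is to obtain the two Lipschitz estimates by a duality argument that absorbs both mollifications into an $L^2$ test function, thereby reducing everything to the weak Lipschitz-in-time bound of Lemma~\ref{uLipcitz}, and then to verify the three conditions in the definition of $\mathcal{L}_{n,\sigma}$ using the results of Section~3. The first step is to upgrade \eqref{LipschitzintimeD} from $u$ to its extension $u_E$. Writing $u_E=u+u_R$ with $u,u_R$ the trivial extensions by zero to $D$, linearity gives $u_E(t)-u_E(s)=\big(u(t)-u(s)\big)+\big(u(t)-u(s)\big)_R$ almost everywhere; for $\psi\in H^3(D)$ the first term is handled directly by \eqref{LipschitzintimeD}, while Lemma~\ref{Rsymmetry} (its change of variables is valid on all of $D$) gives $\langle\big(u(t)-u(s)\big)_R,\psi\rangle_D=\langle u(t)-u(s),\psi_R\rangle_D$, and since $\|\psi_R\|_{H^3(D)}=\|\psi\|_{H^3(D)}$ this term obeys the same bound. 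I would therefore obtain
\begin{equation*}
|\langle u_E(t)-u_E(s),\psi\rangle_D|\le C\,\|u\|^2_{L^\infty(0,T;L^2)}\,\|\psi\|_{H^3(D)}\,|t-s|\qquad\text{for all }\psi\in H^3(D).
\end{equation*}

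To prove \eqref{LipschitzL2J} I would then argue by duality:
\begin{equation*}
\|J_\eps(u_E)(\cdot,t)-J_\eps(u_E)(\cdot,s)\|_{L^2(D_+)}=\sup_{\|h\|_{L^2(D_+)}\le1}\langle J_\eps\big(u_E(t)-u_E(s)\big),h\rangle_{D_+}.
\end{equation*}
Extending $h$ by zero to $\tilde h\in L^2(D)$, and using that $\varphi_\eps$ is radially symmetric, hence even, so that $J_\eps$ is self-adjoint on $D$, the inner product equals $\langle u_E(t)-u_E(s),\,J_\eps\tilde h\rangle_D$; applying the bound above with $\psi=J_\eps\tilde h$ and using Young's inequality to estimate $\|J_\eps\tilde h\|_{H^3(D)}$ by a constant depending only on $\eps$ times $\|h\|_{L^2(D_+)}$ yields \eqref{LipschitzL2J}. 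The estimate \eqref{LipschitzL2JJ} is identical, with $J_\eps\tilde h$ replaced by $J_\eps J_\eps\tilde h$ and the same $H^3(D)$ bound (here one uses $\|\varphi_\eps\|_{L^1}=1$). I would stress that this duality step is exactly what copes with the unbounded direction $x_3$: the naive pointwise-in-$x$ bound $|J_\eps(u_E)(x,t)-J_\eps(u_E)(x,s)|\le C_\eps|t-s|$ is immediate but useless, since $D_+$ has infinite volume, whereas moving the mollifier onto the $L^2$ function keeps the $L^2(D_+)$ norm finite. This is the one step I expect to be the main obstacle; everything else is bookkeeping with results already established.

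It then remains to verify $J_\eps(u_E),J_\eps J_\eps(u_E)\in\mathcal{L}_{n,\sigma}$. For the $L^1(0,T;H^3)$ membership, Young's inequality and Lemma~\ref{PropertiesofE}(1) give $\|J_\eps(u_E)(\cdot,t)\|_{H^3(D_+)}\le\|J_\eps(u_E)(\cdot,t)\|_{H^3(D)}\le C_\eps\|u(t)\|_{L^2(D_+)}\le C_\eps\|u\|_{L^\infty(0,T;L^2)}$, so integration in $t$ gives the claim, measurability in $t$ following from the weak continuity of $u$ (and in fact the argument of Step~2, run against $H^3(D)$ test functions, shows $J_\eps(u_E)$ is Lipschitz into $H^3(D_+)$). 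The condition $J_\eps(u_E)\cdot n=0$ on $\partial D_+$ is precisely Lemma~\ref{PropertiesofE}(3). Finally, to see that $J_\eps(u_E)(\cdot,t)\in H_\sigma(D_+)$ for each $t$, I would pick $u_n\in\mathcal{S}_{n,\sigma}(D_+)$ with $u_n\to u(t)$ in $L^2(D_+)$; by Lemma~\ref{PropertiesofE} each $J_\eps(u_{n,E})$ restricts to $D_+$ as an element of $\mathcal{S}_{n,\sigma}(D_+)$, while $J_\eps(u_{n,E})\to J_\eps(u_E)(\cdot,t)$ in $L^2(D)$, hence in $L^2(D_+)$, so the limit lies in $H_\sigma(D_+)$. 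Combined with \eqref{LipschitzL2J} and the fact that the $H_\sigma$ norm coincides with the $L^2$ norm, this gives $J_\eps(u_E)\in C^{0,1}([0,T];H_\sigma(D_+))$, and the same argument applies verbatim to $J_\eps J_\eps(u_E)$.
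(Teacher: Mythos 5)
Your proposal is correct and takes essentially the same route as the paper: both arguments reduce the estimate to $L^2(D)$, dispose of the reflected part of $u_E$ via the isometry and commutation properties of the reflection, bound the remaining quantity by duality using Lemma~\ref{uLipcitz} with $\psi=J_\eps f$ (so that $\|\nabla J_\eps f\|_{L^\infty}\le C_\eps\|f\|_{L^2}$), and then check membership in $\mathcal{L}_{n,\sigma}$ from Lemma~\ref{PropertiesofE} and $u\in L^\infty(0,T;L^2)$. The only (harmless) differences are that you move the reflection onto the test function before the duality step, where the paper uses the triangle inequality and the $L^2$-isometry first, and that you verify the $H_\sigma$-valuedness by an explicit density argument that the paper leaves implicit.
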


\begin{proof}
Set $v= u_E(t)-u_E(s)$; we have the following bounds for the the left-hand sides of \eqref{LipschitzL2J} and \eqref{LipschitzL2JJ}
\begin{align*}
 \| J_\eps v\|_{L^2(D_+)}\le & \| J_\eps v\|_{L^2(D)},\\
 \|J_\eps J_\eps v\|_{L^2(D_+)}\le & \|J_\eps J_\eps v\|_{L^2(D)}\le  \| J_\eps v\|_{L^2(D)}.
\end{align*}
To estimate the right-hand side
$$
 \| J_\eps v\|_{L^2(D)} = \| J_\eps ([u(t)-u(s)] +[u_R(t)-u_R(s)])\|_{L^2(D)}\le 2\| J_\eps ([u(t)-u(s)])\|_{L^2(D)}.
$$
We use the generalisation of Lemma  \ref{uLipcitz} for $\psi \in H^3$.
Let $\psi=J_\eps f$ for $f\in L^2(D)$ with $\|f\|_{L^2(D)}=1$. To find a bound for $\| J_\eps ([u(t)-u(s)])\|_{L^2(D)}$ we notice, following \eqref{phew}, that
\begin{align*}
 |\langle J_\eps ( u(t)-u(s)),f \rangle_{D}|=|\langle   u(t)-u(s),J_\eps f \rangle_{D}|&\le\|u\|_{L^\infty(0,T;L^2(D_+))}^2\|\nabla J_\eps f\|_{L^\infty}|t-s|\\
 &\le C\|u\|_{L^\infty(0,T;L^2(D_+))}^2\|\varphi_\eps\|_{W^{3,1}}|t-s|\|f\|_{L^2}.
\end{align*}
We can then take the supremum over $\|f\|_{L^2}=1$ over both sides to finish off the Lipschitz in time bound and obtain \eqref{LipschitzL2J} and \eqref{LipschitzL2JJ}.

We now need to prove that the other properties of the space $\mathcal{L}_{n,\sigma}$ are satisfied by both $J_\eps u_E$ and $J_\eps  J_\eps u_E$. Since mollification commutes with differentiation we see that both $J_\eps u_E$ and $J_\eps J_\eps u_{E}$ are divergence free. 
	Finally, since $u\in L^\infty(0,T; L^2)$, we observe that both $J_\eps u_E$ and  $J_\eps J_\eps u_{E} \in L^\infty(0,T;H^3)$ and
	\begin{equation*}
	\|J_\eps J_\eps u_{E}\|_{L^1(0,T;H^3)} \le T \|J_\eps J_\eps u_{E}\|_{L^\infty(0,T;H^3)}
	\end{equation*}
	as $[0,T]$ is bounded (similary for $J_\eps u_E$).
	
	We see from Lemma \ref{PropertiesofE} that $J_\eps J_\eps u_{E}\cdot n$ and $J_\eps u_{E}\cdot n=0$ on $\partial D_+$, and hence both $J_\eps u_E$ and $J_\eps J_\eps u_E$ are in $\mathcal{L}_{n,\sigma}$, as required.
\end{proof}
This section (in particular Corollary \ref{Testfunction}) now allows us to use $J_\eps J_\eps(u_{E})$ as test function in the weak formulation of the Euler equations (\ref{WeaksolutionD+}) and have shown the sufficient regularity of $J_\eps(u_{E})$ needed to manipulate terms in the future.

\section{Energy Conservation: $J_\eps J_\eps u_{E}$ as a test function}

Notice that since $J_\eps J_\eps(u_{E})\in \mathcal{L}_{n,\sigma}$  the following identity is a consequence of Lemma \ref{generalpsi}
\begin{multline*}
 \langle u(t),J_\eps J_\eps(u_{E})(t)\rangle_{D_+}-\langle u(0),J_\eps J_\eps (u_{E})(0)\rangle_{D_+}-\int^t_0\langle u(\tau),\partial_t J_\eps J_\eps(u_{E})(\tau)\rangle_{D_+} \d \tau\\
 =\int^t_0\langle u(\tau)\otimes u(\tau):\nabla J_\eps J_\eps(u_{E})(\tau)\rangle_{D_+}\d \tau.
\end{multline*}
Using that the support of $u$ and $u\otimes u$ is $D_+$  we have  for $v= u$ or $u\otimes u$ that
$$
 \langle v,J_\eps J_\eps(u_{E})(t)\rangle_{D_+}= \langle J_\eps v, J_\eps(u_{E})(t)\rangle_{D_{>-\eps}}=\langle J_\eps v, J_\eps(u_{E})(t)\rangle_{D}.
$$
Therefore
%
\begin{multline}\label{Remainderequation}
 \langle J_\eps (u)(t),J_\eps(u_{E})(t)\rangle_{D}-\langle J_\eps (u)(0),J_\eps (u_{E})(0)\rangle_{D}-\int^t_0\langle J_\eps (u)(\tau),\partial_t J_\eps(u_{E})(\tau)\rangle_{D} \d \tau\\
 =\int^t_0\langle J_\eps(u(\tau)\otimes u(\tau)):\nabla J_\eps(u_{E}))(\tau)\rangle_{D}\d \tau.
\end{multline}


We will now investigate the convergence of \eqref{Remainderequation} as $\eps$ tends to zero, and from there deduce energy conservation.

\subsection{Convergence of the L.H.S. of \eqref{Remainderequation}}

In this subsection we want to take limits as $\eps \to 0$ in \eqref{Remainderequation} and  show that the left-hand side becomes
$$
 \frac{1}{2}\left(\|u(t)\|^2_{L^2(D_+)}-\|u(0)\|^2_{L^2(D_+)}\right).
$$
Thus if we show the R.H.S. converges to zero we will have energy conservation. Here we will use the Lipchitz in time regularity of $J_\eps u_E$ shown in Corollary \ref{Testfunction} to manipulate the term with time derivative in the L.H.S. of \eqref{Remainderequation}.

Now, using Lemma \ref{Mollifierconvegence} we can deal with the first two terms, obtaining
$$
 \lim_{\eps \to 0}\left(\langle J_\eps (u)(t),J_\eps(u_{E})(t)\rangle_{D}-\langle J_\eps (u)(0),J_\eps (u_{E})(0)\rangle_{D}\right) = \|u(t)\|^2_{L^2(D_+)}-\|u(0)\|^2_{L^2(D_+)}.
$$
The last term on the left-hand side of \eqref{Remainderequation} can be rewritten using linearity as
\begin{equation}\label{amess}
 \int^t_0\langle J_\eps (u)(\tau),\partial_t J_\eps(u_{E})(\tau)\rangle_{D} \d \tau= \int^t_0\langle J_\eps (u)(\tau),\partial_t J_\eps(u)(\tau)\rangle_{D} \d \tau+\int^t_0\langle J_\eps (u)(\tau),\partial_t J_\eps(u_{R})(\tau)\rangle_{D} \d \tau.
\end{equation}
Since $J_\eps(u)\in C^{0,1}([0,T];H_\sigma)$ we obtain
$$
 2\int^t_0\langle J_\eps (u)(\tau),\partial_t J_\eps(u)(\tau)\rangle_{D} \d \tau=\int^t_0\partial_t\langle J_\eps (u)(\tau), J_\eps(u)(\tau)\rangle_{D} \d \tau= \|J_\eps u(t)\|^2_{L^2(D)}-\|J_\eps u(0)\|^2_{L^2(D)},
$$
and taking limits yields
$$
 \lim_{\eps \to 0} \int^t_0\langle J_\eps (u)(\tau),\partial_t J_\eps(u)(\tau)\rangle_{D} \d \tau= \frac{1}{2}(\| u(t)\|^2_{L^2(D_+)}-\| u(0)\|^2_{L^2(D_+)}).
$$

The only term remaining on the right-hand side of \eqref{amess} that needs to be controlled vanishes:
$$
 \lim_{\eps \to 0} \int^t_0\langle J_\eps (u)(\tau),\partial_t J_\eps(u_{R})(\tau)\rangle_{D} \d \tau=0.
$$
 From Lemma \ref{Rsymmetry} we see that
$$
  2 \int^t_0\langle  J_\eps u, \partial_t J_\eps u_R\rangle_D \d \tau =  \int ^t_0\langle \partial_t J_\eps u, J_\eps u_R\rangle_D+\langle  J_\eps u, \partial_t J_\eps u_R\rangle_D \d \tau=\int ^t_0 \partial_t \langle J_\eps u J_\eps u_R\rangle_D\d \tau,
$$
Therefore it suffices to show that
\begin{equation}\label{alimit}
\lim_{\eps \to 0}\int^t_0\partial_t\langle J_\eps u, J_\eps u_R\rangle_D\d \tau =0.
\end{equation}
Since both $J_\eps u$ and $J_\eps u_R$ are elements of $C^{0,1}([0,T];H_\sigma)$ this integral is equal to
$$
\langle J_\eps u(t),J_\eps u_R(t)\rangle-\langle J_\eps u(0),J_\eps u_R(0)\rangle,
$$
and since the supports of $u(t)$ and $u_R(t)$ are disjoint \eqref{alimit} follows.

We have now shown that the left-hand side of \eqref{Remainderequation} converges to
$$
 \dfrac{1}{2}\left(\|u(t)\|^2_{L^2(D_+)}-\|u(0)\|^2_{L^2(D_+)}\right).
$$

\subsection{Convergence of R.H.S. of \eqref{Remainderequation}}

Recall that the right-hand side of \eqref{Remainderequation} is
$$
 \lim_{\eps \to 0}\left(\int^t_0\langle J_\eps (u\otimes u)(\tau):\nabla J_\eps(u_{E})(\tau)\rangle_{D}\d \tau\right)=:\lim_{\eps \to 0} I,
$$
which we rewrite as
$$
 I=\int^t_0\langle J_\eps (u_E\otimes u)(\tau):\nabla J_\eps(u_{E})(\tau)\rangle_{D}\d \tau +\int^t_0\langle J_\eps ((u-u_E)\otimes u)(\tau):\nabla J_\eps(u_{E})(\tau)\rangle_{D}\d \tau.
$$
For the second term we notice that since $u-u_E$ equals $u_R$ almost everywhere the support of $u$ and $u-u_E$ only intersect in a set measure zero set and so $(u-u_E)\otimes u=0$ a.e.; therefore  the second term vanishes.
For the first term we commute the mollification with the product, using an identity that is similar to one used in previous works \citep{eyink1994energy,constantin1994onsager,cheskidov2008energy,RS09,shvydkoy2010lectures}, but which  involves  two different functions in the product rather than the same function twice. We will use the identity
$$
 J_\eps(u_E\otimes u)=r_\eps(u_E,u)-(u_E-J_\eps (u_E))\otimes(u-J_\eps(u)) + J_\eps u_E \otimes J_\eps u,
$$
with
$$
 r_\eps(u_E,u ):=\int_{D}\varphi_\eps(y)(u_E(x-y)-u_E(x)) \otimes (u(x-y)-u(x))\d y.
$$

Therefore we obtain
$$
 I=\int^t_0\langle [r_\eps(u_E,u)-(u_E-J_\eps (u_E))\otimes(u-J_\eps(u)) + J_\eps u_E \otimes J_\eps u]:\nabla J_\eps(u_{E})(\tau)\rangle_{D}\d \tau.
$$

First we consider the term
$$
\int^t_0\langle J_\eps u_E \otimes J_\eps u: \nabla J_\eps(u_{E})(\tau)\rangle_{D}\d \tau.
$$
If we integrate by parts we obtain
$$
-\frac{1}{2}\int^t_0\int_{D} (\nabla \cdot J_\eps u) |J_\eps (u_{E})|^2\d x \d \tau=0
$$
by incompressibility.

%
%

We are now left with the remainder terms
\begin{equation}\label{Spaceremainderterms}
 \int^t_0\langle [r_\eps(u_E,u):\nabla J_\eps(u_E)(\tau)\rangle_{D}\d \tau  - \int^t_0\langle [(u_E-J_\eps (u_E))\otimes(u-J_\eps(u))]:\nabla J_\eps(u_E)(\tau)\rangle_{D}\d \tau.
\end{equation}
As $(\nabla \varphi)_\eps$ is an odd function, its integral is zero so we can rewrite $\nabla J_\eps (u_E)$ as
\begin{equation}\label{expandedgraduE}
 \nabla J_\eps (u_E)=\int_{D} (\nabla \varphi_\eps)(y)\otimes   (u_E(x-y)-u_E(x)) \d y.
\end{equation}

For the first term in \eqref{Spaceremainderterms},  since $r_\eps(u,u_E)$ is supported in $D_{>-\eps}$ we have
\begin{multline*}
 \int^t_0\langle r_\eps(u_E,u ):\nabla J_\eps (u_E)(\tau)\rangle_{D_{>-\eps}}\d \tau  \\=\int^t_0\bigg\langle \int_{D}\varphi_\eps (y)(u_E(x-y)-u_E(x)) \otimes (u(x-y)-u(x))\d y\,:\\
 \int_{D} (\nabla \varphi_\eps)(z)\otimes   (u_E(x-z)-u_E(x)) \d z\bigg\rangle_{D_{>-\eps}}\d \tau.
\end{multline*}
Using the changes of variables $z=\eps \xi$, $y=\eps \eta$ and taking the modulus we obtain
\begin{multline*}
\left| \int^t_0\langle r_\eps(u_E,u ):\nabla J_\eps (u_E)(\tau)\rangle_{D_{>-\eps}}\d \tau\right| \\
 \le\int^t_0\int_{D_{>-\eps}}\bigg\{ \int_{B_1(0)}|\varphi(\eta)||u_E(x-\eps \eta)-u_E(x)| |u(x-\eps \eta)-u(x)|\d \eta\\\int_{B_1(0)} \frac{1}{\eps}|\nabla \varphi(\xi)|   |u_E(x-\eps \xi)-u_E(x)| \d \xi\bigg\}\,\d x\,\d \tau.
\end{multline*}
Then we can use Fuibini's theorem and   H\"older's inequality  to obtain
\begin{multline}\label{Remainder1}
 \left| \int^t_0\langle r_\eps(u_E,u ):\nabla J_\eps (u_E)(\tau)\rangle_{D_{>-\eps}}\d \tau  \right| \\
 \leq\int_{B_1(0)}|\varphi(\eta)|\|u_E(\cdot-\eps \eta)-u_E(\cdot)\|_{L^3(0,t;L^3(D_{>-\eps}))}|\|u(\cdot-\eps \eta)-u(\cdot)\|_{L^3(0,t;L^3(D_{>-\eps}))}\d \eta
 \\\times\ \frac{1}{\eps}\int_{B_1(0)}|(\nabla \varphi)(\xi)|   \|u_E(\cdot-\eps \xi)-u_E(\cdot)\|_{L^3(0,t;L^3(D_{>-\eps}))} \d \xi.
\end{multline}

For the remaining term in \eqref{Spaceremainderterms}, since  $J_\eps(u)$ is supported in $D_{>-\eps}$ we have
\begin{multline*}
 \int^t_0\langle (u_E-J_\eps u_E)\otimes(u-J_\eps u) :\nabla J_\eps(u_E)(\tau)\rangle_{D_{>-\eps}}\d \tau \\
=\int^t_0\int_{D_{>-\eps}}\bigg\{\int_{D}\varphi_\eps(z)(u_E(x-z)-u_E(x))\d z\ \otimes \\\int_{D}\varphi_\eps(y)(u(x-y)-u(x))\d y\bigg\}\,:\,\int_{D} (\nabla \varphi_\eps)(w)\otimes   (u_E(x-w)-u_E(x)) \d w\d x\d \tau,
\end{multline*}
where  we have used \eqref{expandedgraduE} for the $\nabla J_\eps(u_E)$ term.
As before,  with  the changes of variables $z=\eta \xi$, $y=\eps \zeta$, $w=\eps \xi$ we have
\begin{multline}\label{Remainder2}
\left| \int^t_0\langle (u_E-J_\eps u_E)\otimes(u-J_\eps u) :\nabla J_\eps (u_E)(\tau)\rangle_{D_{>-\eps}}\d \tau\right|\\\le \int_{B_1(0)}|\varphi(\eta)|\|u_E(\cdot-\eps \eta)-u_E(\cdot)\|_{L^3(0,t;L^3(D_{>-\eps}))}\d \eta
\int_{B_1(0)}|\varphi(\zeta)|\|u(\cdot-\eps \zeta)-u(\cdot)\|_{L^3(0,t;L^3(D_{>-\eps}))}\d \zeta
\\ \times\ \frac{1}{\eps}\int_{B_1(0)}|(\nabla \varphi)(\xi)|\|u_E(\cdot-\eps \xi)-u_E(\cdot)\|_{L^3(0,t;L^3(D_{>-\eps}))} \d \xi.
\end{multline}


Before stating our main result, therefore providing sufficient conditions to guarantee that \eqref{Remainder1} and \eqref{Remainder2} vanish in the limit, we remark that if $u(t)\in C^0(\t^2\times[0,\delta])$, for some $\delta>0$, then since $\t^2\times[0,\delta])$ is compact it follows that $u(t)$ is uniformly continuous on $\t^2\times[0,\delta]$. In particular, there exists a non-decreasing function $w_t\:[0,\infty)\to[0,\infty)$ with $w_t(0)=0$ that is continuous at zero, such that
$$
|u(x+y,t)-u(x,t)|\le w_t(|y|).
$$

%
%
%
%
%

We are now ready to state our main result.

\begin{theorem}[Energy Conservation]
Let $u$ be a weak solution of the Euler equations in the sense of Definition \ref{defEulersoln}. Assume that $u$ satisfies
 \begin{itemize}
  \item the bulk condition,
\begin{equation}\label{bulkcondition}
 \lim_{|y|\to 0}\frac{1}{|y|}\int^T_0\int_{D_{>|y|}} |u(x+y)-u(x)|^3\d x\d t=0,
\end{equation}
and
\item continuity near the boundary, $u\in L^3(0,T;C(\t^2\times[0,\delta])$ for some $\delta>0$.
 \end{itemize}
  Then $u$ conserves energy on $[0,T]$, i.e.\ $\|u(t)\|=\|u(0)\|$ for all $t\in[0,T]$.
\end{theorem}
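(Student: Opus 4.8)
The plan is to pass to the limit $\eps\to0$ in \eqref{Remainderequation}. As established above in the analysis of its left-hand side, the left-hand side of \eqref{Remainderequation} converges to $\tfrac12\bigl(\|u(t)\|^2_{L^2(D_+)}-\|u(0)\|^2_{L^2(D_+)}\bigr)$, while its right-hand side is $I$, whose absolute value has already been bounded by the sum of the right-hand sides of \eqref{Remainder1} and \eqref{Remainder2}. Hence the whole theorem reduces to showing that the expressions in \eqref{Remainder1} and \eqref{Remainder2} tend to zero as $\eps\to0$. Each of these is a product of three factors of the form $\|w(\cdot-h)-w(\cdot)\|_{L^3(0,t;L^3(D_{>-\eps}))}$ with $w\in\{u,u_E\}$ and $|h|\le\eps$ — two of the three factors involving $u_E$ and exactly one involving $u$ — with one factor carrying an extra weight $\eps^{-1}$. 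So everything comes down to estimating these increments for small $\eps$.

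The key estimate I would prove is: for all $|h|\le\eps$ with $2\eps<\delta$,
\begin{equation*}
\bigl\|u_E(\cdot-h)-u_E(\cdot)\bigr\|_{L^3(0,T;L^3(D_{>-\eps}))}\le\eps^{1/3}\mu(\eps),\qquad\bigl\|u(\cdot-h)-u(\cdot)\bigr\|_{L^3(0,T;L^3(D_{>-\eps}))}\le C\eps^{1/3},
\end{equation*}
where $\mu(\eps)\to0$ as $\eps\to0$. For this I would decompose $D_{>-\eps}=D_{>\eps}\cup\bigl(\t^2\times(-\eps,\eps]\bigr)$. On the \emph{bulk} piece $D_{>\eps}$ one has $u_E=u$ and $u_E(\cdot-h)=u(\cdot-h)$, since both arguments have positive third coordinate; and since $D_{>\eps}\subseteq D_{>|h|}$, the bulk hypothesis \eqref{bulkcondition} gives $\int_0^T\int_{D_{>\eps}}|u(x-h)-u(x)|^3\d x\d\tau\le|h|\Omega(\eps)\le\eps\Omega(\eps)$, where $\Omega(\eps):=\sup_{0<|y|\le\eps}\tfrac1{|y|}\int_0^T\int_{D_{>|y|}}|u(x+y)-u(x)|^3\d x\d\tau\to0$; this contributes $o(\eps^{1/3})$ to both increments. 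On the \emph{boundary strip} $\t^2\times(-\eps,\eps]$, of measure $O(\eps)$, I would use the continuity hypothesis: since $u\in C^0(\t^2\times[0,\delta])$ with $u_3=0$ on $\partial D_+$ (the normal trace of $u\in H_\sigma(D_+)$, which for a field continuous up to the boundary is the pointwise restriction), the extension $u_E$ is jointly continuous across $\{x_3=0\}$ — indeed $u_{E,j}(x',z)=u_j(x',|z|)$ for $j=1,2$ and $u_{E,3}(x',z)=\operatorname{sgn}(z)u_3(x',|z|)$ — hence uniformly continuous on the compact set $\t^2\times[-\delta/2,\delta/2]$ with a modulus $\tilde w_t$ satisfying $\tilde w_t(s)\to0$ as $s\to0$ and $\tilde w_t\le2\|u(t)\|_{C^0(\t^2\times[0,\delta])}$; since $t\mapsto\|u(t)\|_{C^0(\t^2\times[0,\delta])}$ lies in $L^3(0,T)$, dominated convergence yields $\int_0^T\int_{\t^2\times(-\eps,\eps]}|u_E(x-h)-u_E(x)|^3\d x\d\tau\le C\eps\int_0^T\tilde w_t(2\eps)^3\d t=o(\eps)$, an $o(\eps^{1/3})$ contribution. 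For the plain $u$-increment over the strip no such improvement is available (the zero-extension of $u$ has a jump at $\{x_3=0\}$), but the crude bound $|u(x-h)-u(x)|\le2\|u(t)\|_{C^0(\t^2\times[0,\delta])}$ there already gives an $O(\eps^{1/3})$ contribution, which is enough. Adding the two pieces (using subadditivity of $s\mapsto s^{1/3}$) gives the claim with $\mu(\eps)=\Omega(\eps)^{1/3}+C\bigl(\int_0^T\tilde w_t(2\eps)^3\d t\bigr)^{1/3}\to0$.

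Inserting these bounds into \eqref{Remainder1} and \eqref{Remainder2}, using $\int|\varphi|,\int|\nabla\varphi|<\infty$ and replacing the $t$-integral by the $T$-integral, each of the two remainder expressions is $\le C\bigl(\eps^{1/3}\mu(\eps)\bigr)\cdot\bigl(\eps^{1/3}\bigr)\cdot\eps^{-1}\bigl(\eps^{1/3}\mu(\eps)\bigr)=C\mu(\eps)^2\to0$. Hence $I\to0$, so the right-hand side of \eqref{Remainderequation} vanishes in the limit, and therefore $\|u(t)\|_{L^2(D_+)}=\|u(0)\|_{L^2(D_+)}$ for every $t\in[0,T]$, which is the asserted energy conservation.

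The step I expect to be the main obstacle is the near-boundary bookkeeping in the second paragraph: one must notice that the single ``bad'' factor — the $u$-increment across the jump of the zero-extension, which decays only like $\eps^{1/3}$ and no better — is harmless only because it always appears multiplied by two $u_E$-increments that genuinely decay like $o(\eps^{1/3})$, thanks to the continuity of $u_E$ up to and across $\{x_3=0\}$. It is precisely this continuity (hence the condition $u_3=0$ on $\partial D_+$, encoded in $u\in H_\sigma(D_+)$, together with $u\in C^0$ near the boundary) that must be exploited, and the resulting three powers of $\eps^{1/3}$ must be matched against the explicit $\eps^{-1}$ for the argument to close.
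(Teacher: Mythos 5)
Your argument is correct and follows essentially the same route as the paper: the same reduction to showing that \eqref{Remainder1} and \eqref{Remainder2} vanish, the same decomposition of $D_{>-\eps}$ into the bulk $D_{>\eps}$ (handled by \eqref{bulkcondition}) and the strip $\t^2\times(-\eps,\eps)$ (handled by the continuity of $u_E$ across $\{x_3=0\}$, which uses $u_3=0$ on $\partial D_+$, for the two $u_E$-factors, and by a crude $L^\infty$ bound for the single $u$-factor), with the same final matching of three powers of $\eps^{1/3}$ against the explicit $\eps^{-1}$. The only difference is cosmetic: you pass to the limit via uniform-in-shift bounds $\Omega(\eps)$, $\mu(\eps)$ pulled outside the $B_1(0)$-integrals, whereas the paper invokes dominated convergence in the mollifier variable.
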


\begin{proof}
It suffices to show that both \eqref{Remainder1} and \eqref{Remainder2}  vanish in the limit as $\eps \to 0$.
First we  would like to bring the limit inside the  integrals  over $B_1(0)$ in both \eqref{Remainder1} and \eqref{Remainder2}. We use the Dominated Convergence Theorem. Since $\varphi \in C^{\infty}_c$ we can find trivial bounds for $\varphi$ and $\nabla \varphi$. Notice that we need to deal with  the factor of $1/\eps$, and factors which are $L^3(0,t;L^3(D_{>-\eps}))$ norms of differences of functions involving $u$ or $u_E$.

We first decompose the $L^3(0,t;L^3(D_{>-\eps}))$ norm by splitting the spatial domain into the bulk area and a strip around the boundary. That is we consider the $L^3(0,t;L^3(D_{>\eps}))$ and $L^3(0,t;L^3(\t^2\times(-\eps,\eps)))$ norms.

For the bulk part, notice that when $x\in D_{>\eps}$ and $\eta,\xi,\zeta\in B_1(0)$ then
$$u_E(x - \cdot \eps)-u_E(x)=u(x - \cdot \eps)-u(x)$$
and we can  therefore define the non-negative function
$$
 f(y)=\frac{1}{|y|}\int^t_0\int_{D_{>\eps}} |\mathbb{I}_{((x+y)\in D_+)}u(x+y)-u(x)|^3\d x\d t
$$
to control the corresponding terms in both  \eqref{Remainder1} and \eqref{Remainder2}. Notice that from the bulk condition \eqref{bulkcondition} it follows that $\lim_{|y|\to 0} f(y)=0$ and therefore for any $\eps >0$ that $\sup_{y\in B_0(\eps)}f(y)\le K$ for some $K=K(\eps)$.
%

We assumed that $u\in L^3(0,T; L^\infty(\t^2\times [0,\eps))$ for $\eps$ sufficiently small, and so using continuity at the boundary and that $u\cdot n=0$ on the boundary we know that $u_E\in L^3(0,T; L^\infty(\t^2\times (-\eps,\eps)))$.
Thus in the region $\t^2 \times (-\eps,\eps)$ we can define the non-negative function
$$
 g(y)=\frac{1}{\eps}\int^t_0\int_{\t^2\times (-\eps,\eps)} |u_E(x+\cdot \eps)-u_E(x)|^3\d x\d t\le \frac{C}{\eps}|\t^2|\eps \int^t_0 \sup_{x\in\t^2\times (-2\eps,2\eps)}|u_E(x)|^3\d t
$$
and see that since $u_E\in L^3(0,T; L^\infty(\t^2\times (-\eps,\eps)))$, the function $g$  is also bounded and integrable. Notice that a similar function $g$ can be defined for the terms involving $u$ instead of $u_E$ as the only property we have used is that $u\in L^3(0,T; L^\infty(\t^2\times [0,\eps))$ for $\eps$ sufficiently small.
%

Using the functions above and the Dominated Convergence Theorem we can move the limit inside the integral, reducing  the problem to showing that
$$
 \limsup_{\eps\to 0}\frac{1}{\eps}\|u(\cdot-\eps \eta)-u(\cdot)\|^3_{L^3(0,t;L^3(D_{>-\eps}))}=C
  $$
  and
  $$
  \lim_{\eps\to 0}\frac{1}{\eps}\|u_E(\cdot-\eps \eta)-u_E(\cdot)\|^3_{L^3(0,t;L^3(D_{>-\eps}))}=0.
$$

We proceed as before, by decomposing $D_{>-\eps}$ into $D_{>\eps}$ and $\t^2\times (-\eps,\eps)$. We first prove the result for the bulk when $x\in D_{>\eps}$. As $\eta\in B_1(0)$ both reduce to showing that
$$
 \lim_{\eps\to 0}\frac{1}{\eps}\|u(\cdot-\eps \eta)-u(\cdot)\|^3_{L^3(0,t;L^3(D_{>\eps}))}=0.
$$
With the change of variables $y=\eps \eta$ for $\eta \in B_1(0)$ we have
$$
 \lim_{|y|\to 0}\frac{1}{|y|}\|u(\cdot-y)-u(\cdot)\|^3_{L^3(0,t;L^3(D_{>\eps}))}=0,
$$
where we have used the  bulk condition \eqref{bulkcondition}.

It remains to show that
\begin{equation}\label{L3boundaryconvegenceconditions1}
 \limsup_{\eps\to 0}\frac{1}{\eps}\|u(\cdot-\eps \eta)-u(\cdot)\|^3_{L^3(0,t;L^3(\t^2 \times (-\eps,\eps)))}=C
 \end{equation}
 and
 \begin{equation}\label{L3boundaryconvegenceconditions2}\lim_{\eps\to 0}\frac{1}{\eps}\|u_E(\cdot-\eps \eta)-u_E(\cdot)\|^3_{L^3(0,t;L^3(\t^2\times (-\eps,\eps)))}=0.
\end{equation}

We now use the continuity of $u$ near the boundary. Now, to deal with \eqref{L3boundaryconvegenceconditions2} note that since the boundary values are the same for $u$ and $u_R$ we have  $u_E(\cdot,t) \in C^0(\t^2\times[-\delta,\delta])$. It follows, since $\partial D^+=\t^2\times\{0\}$ is compact, that for each $t\in[0,T]$ there exists a non-decreasing function $w_t\colon [0,\infty )\to [0,\infty )$ with $w_t(0)=0$ and continuous at $0$, such that
\begin{equation}\label{modcon}
|u(x+z,t)-u(x,t)|<w_t(|z|)
\end{equation}
 whenever $x\in\partial D^+$ and $|z|\le\delta$.

For fixed $t$ and $x'\in \{z=0\}$ we can now write 
\begin{align*}
 |u_E(t,x'+z+y)-u_E(t,x'+z)|&\le |u_E(t,x'+z+y)-u_E(t,x')+u_E(t,x')-u_E(t,x'+z)|\\
 &\le w(t,|y+z|)+w(t,|z|)\\
 &\le 2w(t,2|y|)
\end{align*}
and thus
\begin{align*}
 \frac{1}{|y|}\iint_{\t^2}\int_{-|y|}^{|y|}|u_E(t,x+y)-u_E(t,x)|^3\d x_3\d x_2\d x_1&\le C\frac{1}{|y|}\iint_{\t^2}\int_{-|y|}^{|y|}|w(t,2|y|)|^3\d x_3\d x_2\d x_1\\ 
 &\le C \frac{1}{|y|} |\t^2||y||w(t,2|y|)|^3 \to 0
\end{align*}
as $|y|\to 0$ for almost every $t$.

For the first term in  \eqref{L3boundaryconvegenceconditions1} we can use the fact that $u\in L^3(0,T; L^\infty(\t^2\times (-\delta,\delta)))$ and so
\begin{multline*}
\frac{1}{\eps}\|u(\cdot-\eps \eta)-u(\cdot)\|^3_{L^3(0,t;L^3(\t^2\times (-\eps,\eps)))}=\frac{1}{|y|}\|u(\cdot-y)-u(\cdot)\|^3_{L^3(0,t;L^3(\t^2\times (-\eps,\eps)))}\\\le \frac{C}{|y|}|\t^2||y|\|u(\cdot+y)-u(\cdot)\|_{L^3(0,t;L^\infty(\t^2\times (-\eps,\eps)))}=C|\t^2|\|u(\cdot+y)-u(\cdot)\|_{L^3(0,t;L^\infty(\t^2\times (-\eps,\eps)))}\le C,
\end{multline*}
completing the proof.
\end{proof}

Note that the full strength of the assumption that $u\in L^3(0,T;C^0(\t^2\times[0,\delta])$ is not used in the proof. Rather we require that
\begin{itemize}
\item[(i)] $u\in L^3(0,T;L^\infty(\t^2\times(0,\delta)))$,
\item[(ii)] $u$ is defined pointwise within $\t^2\times[0,\delta]$, and
\item[(iii)] $u(\cdot,t)$ is continuous at every $x\in\partial D^+$;
\end{itemize}
properties (ii) and (iii) together yield \eqref{modcon}.

\section{Conclusion}

Assuming the simple bulk condition
\begin{equation*}
\lim_{|y|\to 0}\frac{1}{|y|}\int^T_0\int_{\t^2\times \R_+} |\mathbb{I}_{(x+y\in \t^2\times \R_+)}u(x+y)-u(x)|^3\d x\d t=0,
\end{equation*}
which is similar to the weakest conditions known on $\R^d$ or $\t^d$, and continuity near the boundary we have proved energy conservation of the incompressible Euler equations with a flat boundary of finite area.
As remarked before this  method does not require any treatment of the pressure; it is an interesting open problem whether energy conservation in a general bounded domain can be proved without involving the pressure.

In \cite{RRS1} we show that one can define a notion of `weak solution' for the Euler equations on a bounded domain that generalises the one we use here, in such a way that the pressure does not appear. Any sufficiently smooth weak solution (understood in this sense) has a corresponding pressure so that the pair $(u,p)$ is a solution in the sense required by \cite{bardos2017}. This means that their argument, while relying on the pressure, is applicable to the (perhaps more natural) definition of weak solution in which the pressure plays no role.

We conclude by pointing out that while we have considered an extension to the full domain, it would have been possible to consider an extension to a smaller strip. The key observation relies on noticing that the main results we have used work when applied to a truncation of the reflection, even for sharp truncations. We state the corresponding {\it local} version of Lemma \ref{Rsymmetry} to illustrate this point; this version of the analysis is carried out in full in \cite{Skipper}.

\begin{lemma}\label{Rsymmetryconclusion}
 For any functions $u$ and $v$ on $D$, define $v_r=\mathbb{I}_{D_{>-\gamma}}v_R$ for some $\gamma>0$. Then
 $$
  \langle u,v_r\rangle_{\t^2\times (-\delta,\delta)}=\langle u_r,v \rangle_{\t^2\times (-\delta,\delta)}
$$
for any $0<\delta <\gamma$.
Further,
$$
  J_\eps (f_r)(x)=J_\eps (f)_r(x)
$$
and thus
$$
  \langle J_\eps u, J_\eps v_r\rangle_{\t^2\times (-\delta,\delta)}=\langle J_\eps u_r,J_\eps v \rangle_{\t^2\times (-\delta,\delta)},
$$
provided $0<\delta\le \gamma-\eps$.
\end{lemma}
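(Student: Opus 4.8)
The plan is to prove Lemma \ref{Rsymmetryconclusion} by mimicking the proof of Lemma \ref{Rsymmetry}, tracking carefully how the truncation by $\mathbb{I}_{D_{>-\gamma}}$ interacts with the change of variables $x_3\mapsto -x_3$ and with the mollifier. First I would establish the pairing identity $\langle u,v_r\rangle_{\t^2\times(-\delta,\delta)}=\langle u_r,v\rangle_{\t^2\times(-\delta,\delta)}$. Writing $v_r=\mathbb{I}_{D_{>-\gamma}}v_R$, so that $v_r(\tilde x,x_3)=v_R(\tilde x,x_3)$ for $x_3>-\gamma$ and $0$ otherwise, I substitute $x_3=-\xi_3$ exactly as in the untruncated case. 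The key observation is that on the symmetric domain $\t^2\times(-\delta,\delta)$ with $\delta<\gamma$, both the indicator cutting at $x_3>-\gamma$ (for $v_r$) and the reflected indicator cutting at $\xi_3<\gamma$ (which appears after the change of variables in $u_r$) are identically $1$ throughout $(-\delta,\delta)$, because $\delta<\gamma$. Hence the truncations are invisible on the domain of integration and the computation reduces verbatim to the chain of equalities in the proof of Lemma \ref{Rsymmetry}, giving $\langle u,v_r\rangle_{\t^2\times(-\delta,\delta)}=\langle u_R,v\rangle_{\t^2\times(-\delta,\delta)}=\langle u_r,v\rangle_{\t^2\times(-\delta,\delta)}$, the last step again using $\delta<\gamma$ so that $u_r=u_R$ on the domain.

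Next I would prove the mollifier commutation identity $J_\eps(f_r)(x)=J_\eps(f)_r(x)$, where $J_\eps(f)_r:=\mathbb{I}_{D_{>-\gamma}}(J_\eps f)_R$. This is the step where the condition $\delta\le\gamma-\eps$ enters. For $x$ with $x_3>-\gamma+\eps$, the mollification $J_\eps(f_r)(x)=\int_{B(0,\eps)}\varphi_\eps(y)f_r(x-y)\d y$ only samples $f_r$ at points $x-y$ with $(x-y)_3>-\gamma$, where $f_r=f_R$ agrees with the untruncated reflection; there $J_\eps(f_r)=J_\eps(f_R)=(J_\eps f)_R$ by the identity already established in Lemma \ref{Rsymmetry} (direct calculation using that $\varphi$ is radially symmetric, hence even in $x_3$). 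Meanwhile for such $x$ the cutoff $\mathbb{I}_{D_{>-\gamma}}$ is $1$, so $J_\eps(f)_r(x)=(J_\eps f)_R(x)$ too. For $x_3\le -\gamma+\eps$ one must check both sides vanish or at least agree on the region that matters; but since we only ever evaluate these functions on $\t^2\times(-\delta,\delta)$ with $\delta\le\gamma-\eps$, i.e.\ $x_3>-\gamma+\eps$, it suffices to have the identity on that strip, which is exactly what the argument above delivers. The third displayed equation then follows by applying the first part of the lemma to $J_\eps u$ and $J_\eps v$ in place of $u$ and $v$, which is legitimate precisely because $\delta\le\gamma-\eps<\gamma$.

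The main obstacle, such as it is, is bookkeeping: one must be careful that after the change of variables $x_3\mapsto-\xi_3$ the truncation $\mathbb{I}_{\{x_3>-\gamma\}}$ turns into $\mathbb{I}_{\{\xi_3<\gamma\}}$, which is the truncation defining $u_r$ only after a second reflection — so the cleanest route is to verify that on the symmetric slab $(-\delta,\delta)$ every indicator in sight is simply the constant $1$, reducing everything to the already-proven untruncated statements. There is no analytic difficulty here; the content is entirely in matching the geometric constraint $\delta<\gamma$ (for the pairing identity) and $\delta\le\gamma-\eps$ (once a mollifier of width $\eps$ is applied, since mollification spreads the effective support of the cutoff by $\eps$) against the supports of the cutoffs, and then invoking Lemma \ref{Rsymmetry}. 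I would present the proof in exactly this order: first the change-of-variables pairing identity with the observation that cutoffs are trivial on the slab, then the mollifier commutation with the $\eps$-loss in the admissible range of $\delta$, then the combined mollified identity as an immediate corollary.
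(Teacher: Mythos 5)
Your argument is correct, and it is worth noting that the paper itself offers no proof of Lemma~\ref{Rsymmetryconclusion}: it is stated in the Conclusion only as an illustration of the ``local'' versions of the earlier results, with the full analysis deferred to \cite{Skipper}, so there is no in-paper proof to compare against. Your reduction is clearly the intended one. On the slab $\t^2\times(-\delta,\delta)$ with $\delta<\gamma$ the indicator $\mathbb{I}_{D_{>-\gamma}}$ is identically $1$, so $u_r=u_R$ and $v_r=v_R$ there and the pairing identity collapses to the change-of-variables computation of Lemma~\ref{Rsymmetry}; and since the mollifier is supported in a cube of side $\eps$, for $x_3>-\gamma+\eps$ it never samples the truncated region, which gives $J_\eps(f_r)=(J_\eps f)_r$ exactly where it is needed and explains the loss from $\delta<\gamma$ to $\delta\le\gamma-\eps$ in the final identity. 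The one point requiring genuine care --- that the two sides of the commutation identity differ on the transition strip $|x_3+\gamma|\lesssim\eps$, which must therefore be excluded from the domain of integration --- is the point you identify and handle correctly.
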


The fact that one can consider local versions of the results we have used suggests that these ideas could be transferred to more complicated geometries, where extension to the full domain might be otherwise problematic.

\section*{Acknowledgements}

JLR is currently supported by the European Research Council, grant no.\ 616797. JWDS is supported by EPSRC as part of the MASDOC DTC at the University of Warwick, Grant No. EP/HO23364/1.


\begin{thebibliography}{10}
\providecommand{\natexlab}[1]{#1}
\providecommand{\url}[1]{\texttt{#1}}
\expandafter\ifx\csname urlstyle\endcsname\relax
  \providecommand{\doi}[1]{doi: #1}\else
  \providecommand{\doi}{doi: \begingroup \urlstyle{rm}\Url}\fi

%
%

\bibitem[Bardos \& Titi(2018)]{bardos2017} C.~Bardos \& E.~Titi (2018) Onsager's Conjecture for the Incompressible Euler Equations in Bounded Domains.
    \emph{Archiv. Rat. Mech. Anal.} {\bf 228}, 197--207.

\bibitem[Buckmaster et~al.(2016) Buckmaster, De Lellis, Sz\'{e}kelyhidi, \& Vicol]{delellis2017}
T.~Buckmaster, C.~De~Lellis, L.~Sz\'{e}kelyhidi, \& V.~Vicol (2016)
\newblock \emph{Onsager's conjecture for admissible weak solutions}.
\newblock \emph{Comm. Pure Appl. Math.}, to appear.

\bibitem[Cheskidov et~al.(2008)Cheskidov, Constantin, Friedlander, \&
  Shvydkoy]{cheskidov2008energy}
A.~Cheskidov, P.~Constantin, S.~Friedlander, \& R.~Shvydkoy (2008)
\newblock Energy conservation and {O}nsager's conjecture for the {E}uler
  equations.
\newblock \emph{Nonlinearity} {\bf{ 21}}, 1233--1252.
%
\bibitem[Constantin, E, \& Titi(1994)]{constantin1994onsager}
P.~Constantin, W.~E, \& E.~Titi (1994)
\newblock Onsager's conjecture on the energy conservation for solutions of
  {E}uler's equation.
\newblock \emph{Communications in Mathematical Physics} {\bf{165}},
  207--209.
%
%
%
\bibitem[Duchon \& Robert(2000)]{duchon2000inertial}
J.~Duchon \& R.~Robert (2000)
\newblock Inertial energy dissipation for weak solutions of incompressible
  {E}uler and {N}avier-{S}tokes equations.
\newblock \emph{Nonlinearity} {\bf{13}}, 249--255.
%
\bibitem[Eyink(1994)]{eyink1994energy}
G.~Eyink (1994)
\newblock Energy dissipation without viscosity in ideal hydrodynamics {I}.
  {F}ourier analysis and local energy transfer.
\newblock \emph{Physica D: Nonlinear Phenomena} {\bf{78}}, 222--240.
%
%

\bibitem[Isett(2018)]{isett2016}
P.~Isett (2018)
\newblock A {P}roof of {O}nsager's {C}ojecture.
\newblock \emph{Annals of Math.}, to appear.

%
%
 \bibitem[Lions(1997)]{lions1997mathematical}
 P.-L.~Lions (1997)
 \newblock \emph{Mathematical topics in fluid mechanics, volume {1:} {I}ncompressible
   models.}
 \newblock Oxford University Press.
%
\bibitem[Onsager(1949)]{onsager1949statistical}
L.~Onsager (1949)
\newblock Statistical hydrodynamics.
\newblock \emph{Il Nuovo Cimento (1943-1954)} {\bf{6}}, 279--287.

\bibitem[Robinson, Rodrigo, \& Sadowksi(2016)]{RRS}
J.C. Robinson, J.L. Rodrigo, \& W. Sadowski (2016)
\newblock \emph{The three-dimensional Navier--Stokes equations}.
\newblock Cambridge University Press.

\bibitem[Robinson et~al.(2018) Robinson, Rodrigo, \& Skipper]{RRS1}
J.C.~Robinson, J.L.~Rodrigo, \& J.W.D.~Skipper (2018)
\newblock \emph{Energy conservation in the 3D Euler equations on $\t^2 \times \R_+$}.
\newblock In C.L.\ Fefferman, J.L.\ Rodrigo, \& J.C. Robinson (Eds.), \emph{Partial differential equations in fluid mechanics}, LMS Lecture Notes. Cambridge University Press, Cambridge, UK.

\bibitem[Shvydkoy(2009)]{RS09}
R.~Shvydkoy (2009)
\newblock On the energy of inviscid singular flows.
\newblock \emph{J. Math. Anal. Appl.} {\bf 349},
  583--595.


\bibitem[Shvydkoy(2010)]{shvydkoy2010lectures}
R.~Shvydkoy (2010)
\newblock Lectures on the {O}nsager conjecture.
\newblock \emph{Discrete Contin. Dyn. Syst. Ser. S} {\bf{3}},
  473--496.

\bibitem[Skipper(2018)]{Skipper}
J.W.D.~Skipper (2018)
\newblock \emph{Energy conservation for the Euler equations in domains with boundary}.
PhD Thesis, University of Warwick.








\end{thebibliography}
\end{document}